\DeclareSymbolFont{cyrletters}{OT2}{wncyr}{m}{n}
\DeclareMathSymbol{\Sha}{\mathalpha}{cyrletters}{"58}
\newcommand{\defi}[1]{\textsf{#1}} 
\newcommand{\Aff}{\mathbb{A}}
\newcommand{\E}{\mathbb{E}}
\newcommand{\F}{\mathbb{F}}
\newcommand{\PP}{\mathbb{P}}
\newcommand{\PPdual}{\check{\mathbb{P}}}
\newcommand{\Z}{\mathbb{Z}}
\newcommand{\kbar}{{\overline{k}}}
\newcommand{\pp}{\mathfrak{p}}
\newcommand{\calM}{\mathcal{M}}
\newcommand{\calN}{\mathcal{N}}
\DeclareMathOperator{\GS}{GS}
\DeclareMathOperator{\Mon}{Mon}
\DeclareMathOperator{\Aut}{Aut}
\DeclareMathOperator{\codim}{codim}
\DeclareMathOperator{\Gal}{Gal}
\DeclareMathOperator{\Irr}{Irr}
\DeclareMathOperator{\Prob}{Prob}
\DeclareMathOperator{\Spec}{Spec}
\newcommand{\bad}{{\operatorname{bad}}}
\newcommand{\good}{{\operatorname{good}}}
\newcommand{\red}{{\operatorname{red}}}
\newcommand{\injects}{\hookrightarrow}
\newcommand{\intersect}{\cap} 
\newcommand{\isom}{\simeq}
\newcommand{\Union}{\bigcup} 
\newcommand{\isomto}{\overset{\sim}{\rightarrow}}
\newtheorem{theorem}{Theorem}[section]
\newtheorem{lemma}[theorem]{Lemma}
\newtheorem{corollary}[theorem]{Corollary}
\theoremstyle{definition}
\theoremstyle{remark}
\newtheorem{remark}[theorem]{Remark}
\newtheorem{example}[theorem]{Example}
\g@addto@macro\bfseries{\boldmath} 
\begin{document}

\title[Bertini irreducibility theorem]{The exceptional locus in the \\ Bertini irreducibility theorem for a morphism}
\subjclass[2020]{Primary 14D05; Secondary 14A10, 14G15.}
\keywords{Bertini irreducibility theorem, Lang--Weil bound, finite field, random hyperplane slicing}

\author{Bjorn Poonen}
\thanks{This article is published as Bjorn Poonen, Kaloyan Slavov, The Exceptional Locus in the Bertini Irreducibility Theorem for a Morphism, \emph{International Mathematics Research Notices}, rnaa182, \url{https://doi.org/10.1093/imrn/rnaa182}.  B.P.\ was supported in part by National Science Foundation grant DMS-1601946 and Simons Foundation grants \#402472 (to Bjorn Poonen) and \#550033.  K.S.\ was supported by NCCR SwissMAP of the SNSF.}
\address{Department of Mathematics, Massachusetts Institute of Technology, Cambridge, MA 02139-4307, USA}
\email{poonen@math.mit.edu}
\urladdr{\url{http://math.mit.edu/~poonen/}}

\author{Kaloyan Slavov}
\address{Department of Mathematics, ETH Z\"{u}rich,
R\"{a}mistrasse 101, 8006 Z\"{u}rich, Switzerland
}
\email{kaloyan.slavov@math.ethz.ch}

\date{August 4, 2020}

\begin{abstract}
We introduce a novel approach to Bertini irreducibility theorems over an arbitrary field, based on random hyperplane slicing over a finite field.
Extending a result of Benoist, we prove that for a morphism $\phi \colon X \to \PP^n$ 
such that $X$ is geometrically irreducible and the nonempty fibers of $\phi$ all have the same dimension,
the locus of hyperplanes $H$ such that $\phi^{-1} H$ is not geometrically irreducible has dimension at most
$\codim\phi(X)+1$. 
We give an application to monodromy groups above hyperplane sections.
\end{abstract}

\maketitle

\section{Introduction}\label{S:introduction}

Most Bertini theorems state that 
a moduli space of hyperplanes
contains a dense open subset whose points correspond to 
hyperplanes with some good property,
so if the moduli space has dimension $n$, 
the locus of bad hyperplanes has dimension at most $n-1$.
In contrast, we exhibit Bertini theorems in which the bad locus is often much smaller.

\subsection{Bertini irreducibility theorems}

We work over an arbitrary ground field $k$.
By variety, we mean a separated scheme of finite type over $k$; subvarieties need only be locally closed.
Given $\PP^n$, let $\PPdual^n$ be the dual projective space, 
so $H \in \PPdual^n$ means that $H$ is a hyperplane in $\PP^n$ (over the residue field of the corresponding point).
The following is part of a theorem of Olivier Benoist:

\begin{theorem}[cf.~\cite{Benoist2011}*{Th\'eor\`eme~1.4}]
\label{T:subvariety}
Let $X$ be a geometrically irreducible subvariety of $\PP^n$ for some $n \ge 0$.
Let $\calM_{\bad} \subseteq \PPdual^n$ 
be the constructible locus 
parametrizing hyperplanes $H$ such that $H \intersect X$ 
is not geometrically irreducible.
Then $\dim \calM_{\bad} \le \codim X + 1$.
\end{theorem}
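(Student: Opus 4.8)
The plan is to combine classical Bertini irreducibility (a general hyperplane section of a geometrically integral variety of dimension $\ge 2$ is again geometrically integral) with an induction on $\dim X$ that gains one unit of codimension at each step. After base-changing to $\kbar$ (which affects no dimension) I assume $k$ algebraically closed, and after replacing $X$ by $X_{\red}$ (which changes no hyperplane section topologically) I assume $X$ geometrically integral. Write $\calM_{\bad}(X)=\{H\in\PPdual^n : X\cap H\text{ not geometrically irreducible}\}$, with the convention that $\varnothing$ is not irreducible, so that $\{H : X\cap H=\varnothing\}\subseteq\calM_{\bad}(X)$. I would prove, by induction on $d=\dim X$ and for all ambient $\PP^n$ simultaneously, that $\dim\calM_{\bad}(X)\le\codim X+1=n-d+1$. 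The cases $d\le 1$ are immediate, since then $\calM_{\bad}(X)\subseteq\PPdual^n$ has dimension $\le n$; so assume $d\ge 2$ and the statement in dimension $d-1$.

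By classical Bertini there is a dense open $U\subseteq\PPdual^n$ with $X\cap H$ geometrically integral of dimension $d-1$ for all $H\in U$; since $\calM_{\bad}(X)$ is disjoint from $U$ and contained in a proper closed set, $D:=\dim\calM_{\bad}(X)\le n-1$ and $\overline{\calM_{\bad}(X)}\cap U=\varnothing$. For $H_0\in\calM_{\bad}(X)$ with $X\cap H_0\ne\varnothing$ the section $X\cap H_0$ is pure of dimension $d-1$ (note $H_0\not\supseteq X$, else $H_0\notin\calM_{\bad}(X)$), with $\ge 2$ irreducible components $Z_1,\dots,Z_m$; set $\calB(H_0)=\bigcup_{i\ne j}\{H : H\cap(Z_i\setminus Z_j)=\varnothing\}$, a proper closed subset of $\PPdual^n$, and set $\calB(H_0)=\varnothing$ if $X\cap H_0=\varnothing$. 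The key claim is: for $H_0\in\calM_{\bad}(X)$ and $H_1\notin\calB(H_0)$, the hyperplane $H_0\cap H_1$ of $H_1$ lies in $\calM_{\bad}(X\cap H_1)$. If $X\cap H_0=\varnothing$ this holds because $(X\cap H_1)\cap(H_0\cap H_1)\subseteq X\cap H_0=\varnothing$. Otherwise $H_1$ meets every $Z_i\setminus Z_j$, and a short argument (a case analysis, via Krull's principal ideal theorem, on whether $H_1$ contains some $Z_i$) shows $X\cap H_0\cap H_1=\bigcup_i(Z_i\cap H_1)$ is reducible: were it irreducible with unique component $W$, then $Z_1\cap H_1=W=Z_2\cap H_1$, forcing $Z_1\cap H_1\subseteq Z_2$ and contradicting that $H_1$ meets $Z_1\setminus Z_2$.

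Now take $(H_0,H_1)$ with $H_0\in\calM_{\bad}(X)$ and $H_1\in U$: since $U\cap\calM_{\bad}(X)=\varnothing$ we have $H_0\ne H_1$, so either $H_1\in\calB(H_0)$, or — by the key claim — $H_0\cap H_1\in\calM_{\bad}(X\cap H_1)$. Hence $\calM_{\bad}(X)\times U\subseteq\calI_1\cup\calI_2$, where $\calI_1=\{(H_0,H_1) : H_0\in\calM_{\bad}(X),\ H_1\in\calB(H_0)\}$ and $\calI_2=\{(H_0,H_1) : H_0\in\calM_{\bad}(X),\ H_1\in U,\ H_0\ne H_1,\ H_0\cap H_1\in\calM_{\bad}(X\cap H_1)\}$. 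Fibering $\calI_1$ over $H_0$, with fibers $\calB(H_0)$ of dimension $\le n-1$, gives $\dim\calI_1\le D+(n-1)$. Fibering $\calI_2$ over $H_1\in U$: by induction $\dim\calM_{\bad}(X\cap H_1)\le(n-1)-(d-1)+1=n-d+1$, and the fiber of $H_0\mapsto H_0\cap H_1$ over a hyperplane $H'$ of $H_1$ is $(\ell_{H'}\setminus\{H_1\})\cap\calM_{\bad}(X)$, where $\ell_{H'}$ is the pencil of hyperplanes of $\PP^n$ containing $H'$; this pencil is a line through $H_1\notin\overline{\calM_{\bad}(X)}$, hence meets $\calM_{\bad}(X)$ in a finite set, so $\dim\calI_2\le n+(n-d+1)=2n-d+1$. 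Therefore
\[
D+n=\dim\bigl(\calM_{\bad}(X)\times U\bigr)\ \le\ \max\bigl(D+n-1,\ 2n-d+1\bigr),
\]
and since $D+n-1<D+n$ this forces $D+n\le 2n-d+1$, i.e.\ $D\le n-d+1$, completing the induction.

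The step I expect to be the main obstacle is making the key claim and its surrounding bookkeeping rigorous in full generality: checking it at the degenerate boundary cases ($X$ not nondegenerate, $X\cap H_0$ or a further section empty or non-reduced, some $Z_i$ contained in $H_1$), and verifying that $\calM_{\bad}(X)$, $\calB(H_0)$, $U$, $\calI_1$, $\calI_2$ are constructible and interact correctly with the fiber-dimension inequalities used above. The other ingredient, classical Bertini irreducibility, is standard; but in the spirit of this paper it too can be derived by a finite-field argument — spread $X$ out and reduce to the case $k=\F_q$, then note that Lang--Weil gives $\sum_{H\in\PPdual^n(\F_q)}\bigl(\#(X\cap H)(\F_q)-q^{d-1}\bigr)^2=O\bigl(q^{\,n+2d-5/2}\bigr)$, because the main terms of $\sum_H\#(X\cap H)(\F_q)$ and $\sum_H\#(X\cap H)(\F_q)^2$ (computed by counting incidences with points, resp.\ pairs of points, of $X$) cancel against $q^{2d-2}\#\PPdual^n(\F_q)$; this bounds the number of $H$ whose section has the wrong point count, hence the dimension of the corresponding bad locus.
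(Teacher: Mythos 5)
Your proposal takes a genuinely different route from the paper, and the underlying idea is sound. The paper reduces to finite fields, then uses random hyperplane slicing: Lang--Weil controls the mean and variance of $\#(X\cap H)(\F_q)$, and Chebyshev's inequality shows that few hyperplanes can have a section with the wrong point count, which forces the bad locus to be small (Lemmas~\ref{var_bound} and~\ref{L:very bad hyperplanes}). You instead run an induction on $\dim X$, using classical Bertini irreducibility to get the dense open $U$ and then an incidence-variety argument $\calM_\bad(X)\times U\subseteq\calI_1\cup\calI_2$ to transfer the bound from $\calM_\bad(X\cap H_1)$ back to $\calM_\bad(X)$. Both reach the same estimate, but they buy different things: the paper's counting argument proves Theorem~\ref{T:fibers of constant dimension} (an arbitrary morphism $\phi\colon X\to\PP^n$ with equidimensional fibers) in the same stroke, because it never needs $\phi^{-1}H$ to be a hyperplane section of a fixed ambient variety, whereas your induction slices the ambient $\PP^n$ and so is tied to the subvariety case (it is not obvious how to iterate it through a general $\phi$). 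On the other hand, your argument is purely geometric and self-contained given classical Bertini, and avoids spreading out over $\Z$.

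Two points need attention. First, in the key claim the assertion ``$Z_1\cap H_1 = W = Z_2\cap H_1$'' is too strong: from $W = \bigcup_i (Z_i\cap H_1)$ irreducible you only get $Z_{i_0}\cap H_1 = W$ for a \emph{single} $i_0$, say $i_0=1$; but that already gives $Z_2\cap H_1\subseteq W\subseteq Z_1$, contradicting $H_1\cap(Z_2\setminus Z_1)\neq\varnothing$, so the conclusion survives after this small repair. Second, and this is the real gap you correctly anticipated, the bound $\dim\calI_1\le D+(n-1)$ uses the fiber-dimension inequality for the projection $\calI_1\to\calM_\bad(X)$, and that inequality requires $\calI_1$ to be constructible (or contained in a constructible set of that dimension). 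Since $\calB(H_0)$ is defined via the irreducible components of $X\cap H_0$, which do not vary in any obvious algebraic family, you must first stratify $\calM_\bad(X)$ into finitely many locally closed pieces on which the configuration of components is uniform (using, e.g., \cite{EGA-IV.III}*{9.7.7} and boundedness of the degree of $X\cap H_0$), and argue on each stratum. This is doable but is not a formality; without it the decomposition $\calM_\bad(X)\times U\subseteq\calI_1\cup\calI_2$ does not yet yield the dimension inequality. The remaining boundary cases you list ($X\cap H_0=\varnothing$, some $Z_i\cap H_1=\varnothing$, $H_0\supseteq X$) are genuinely routine once the stratification is in place.
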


\begin{example}
For a hypersurface $X \subset \PP^n$, Theorem~\ref{T:subvariety} says $\dim \calM_{\bad} \le 2$.
\end{example}

\begin{remark}
The bound $\codim X+1$ is best possible: see \cite{Benoist2011}*{Section~3.1}.
\end{remark}

\begin{remark}
Benoist assumes that $X$ is closed and geometrically integral, but these additional hypotheses can easily be removed.
In fact, he bounds a larger set $\calM_{\bad}$ that includes also the $H$ such that $H \intersect X$ is not generically reduced.
Additionally, he proves a best possible analogue in which hyperplanes are replaced by hypersurfaces of a fixed degree $e$.
Benoist's proof uses a degeneration to a union of hyperplanes.
\end{remark}

Our key new observation is that the statistics of random hyperplane slices over a finite field
give another way to bound the bad locus in Bertini irreducibility theorems.
We were inspired by \cite{Tao2012-langweil}, 
which used random slicing to give a proof of the Lang--Weil theorem, and by \cite{Slavov2017}, which used random slicing to refine the Lang--Weil bounds for hypersurfaces.
Using this approach, we give a new proof of Theorem~\ref{T:subvariety}, and generalize it to a setting 
analogous to that in \cite{Jouanolou1983}*{Th\'eor\`eme~6.3(4)}:

\begin{theorem}
\label{T:fibers of constant dimension}
Let $X$ be a geometrically irreducible variety.
Let $\phi \colon X \to \PP^n$ be a morphism.
Let $\calM_{\bad}$ be the 
constructible locus parametrizing hyperplanes $H \subset \PP^n$
such that $\phi^{-1} H$ is not geometrically irreducible.
If the nonempty fibers of $\phi$ all have the same dimension,
then $\dim \calM_{\bad} \le \codim \phi(X) + 1$.
\end{theorem}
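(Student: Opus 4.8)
The plan is to recast the statement as point counting over finite fields, reading the geometric irreducibility of $\phi^{-1}H$ off $\#(\phi^{-1}H)(\F_q)$, in the spirit of \cite{Tao2012-langweil}.

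First I would reduce to a finite ground field. Since $\dim\calM_{\bad}$ does not change under extension of $k$, and whether $\phi^{-1}H$ is geometrically irreducible is insensitive to such extensions at each point $H$, we may assume $k=\kbar$. Spreading out $X$, $\phi$, the integer $\dim\phi(X)$, the constant-fiber-dimension hypothesis, and the constructible set $\calM_{\bad}$ over a finitely generated $\Z$-subalgebra $R\subseteq k$, and using that the fibers of $\calM_{\bad}\to\Spec R$ over the closed points in a dense open all have dimension $\dim\calM_{\bad}$, it suffices to bound $\dim(\calM_{\bad})_{\F_q}$ for residue fields $\F_q$ of infinitely many maximal ideals of $R$, with $q\to\infty$. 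Fix such a field with $q$ large. Set $e:=\dim X$, $m:=\dim\phi(X)$, $c:=n-m$, and assume $m\ge 1$ (otherwise $\phi$ is constant, $c=n$, and the bound is trivial). Let $\calS\subseteq\PPdual^n$ be the locus of $H$ with $\phi^{-1}H$ empty or containing $\phi(X)$: since $\overline{\phi(X)}\setminus\phi(X)$ has dimension $<m$ and $\phi(X)$ spans a linear subspace of dimension $\ge m$, while the hyperplanes containing a fixed $(m-1)$-dimensional variety form a family of dimension $\le c$, one gets $\dim\calS\le c$. For $H\notin\calS$ the scheme $\phi^{-1}H=\phi^{*}H$ is an effective Cartier divisor on $X$ distinct from $X$, hence pure of dimension $e-1$; thus for such $H$, failure of geometric irreducibility means precisely that $(\phi^{-1}H)_{\Fbar_q}$ has at least two irreducible components, all of dimension $e-1$. (The hypothesis is not used for this.)

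The engine is a variance computation. Double-counting incidences — the hyperplanes through one, resp.\ two distinct, fixed $\F_q$-rational points of $\PP^n$ forming a $\PPdual^{n-1}$, resp.\ $\PPdual^{n-2}$ — gives, with $D:=\#(X\times_{\PP^n}X)(\F_q)$ and all sums over $\PPdual^n(\F_q)$,
\[
\sum_{H}\#(\phi^{-1}H)(\F_q)=\#X(\F_q)\,\#\PPdual^{n-1}(\F_q),\qquad
\sum_{H}\#(\phi^{-1}H)(\F_q)^2=D\,\#\PPdual^{n-1}(\F_q)+\bigl(\#X(\F_q)^2-D\bigr)\,\#\PPdual^{n-2}(\F_q).
\]
An elementary manipulation rewrites the mean $\mu$ and variance of $H\mapsto\#(\phi^{-1}H)(\F_q)$ over $\PPdual^n(\F_q)$ as
\[
\mu\le q^{e-1}+O(q^{e-1/2}),\qquad
\mathrm{Var}=\frac{q^{n-1}}{\#\PPdual^n(\F_q)}\Bigl(D-\frac{\#X(\F_q)^2}{\#\PPdual^n(\F_q)}\Bigr)\le\frac{D}{q},
\]
the bound on $\mu$ using that $X$ is geometrically irreducible. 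This is where the hypothesis enters: $X\times_{\PP^n}X\to\phi(X)$ has all nonempty fibers of dimension $2(e-m)$, so $\dim(X\times_{\PP^n}X)\le 2e-m$, whence $D=O(q^{2e-m})$ and $\mathrm{Var}=O(q^{2e-m-1})$. Chebyshev's inequality now gives the \emph{key estimate}: for all finite fields of sufficiently large order over which the data is defined,
\[
\#\bigl\{H\in\PPdual^n(\F_q):\#(\phi^{-1}H)(\F_q)\ge\tfrac32 q^{e-1}\bigr\}=O(q^{c+1}),
\]
with a uniform implied constant.

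Passing from the key estimate to the theorem is the step I expect to be the main obstacle. By the Lang--Weil bound, if $H\notin\calS$ and $\phi^{-1}H$ has two distinct geometric components both defined over $\F_q$, then $\#(\phi^{-1}H)(\F_q)\ge 2q^{e-1}-O(q^{e-3/2})\ge\tfrac32 q^{e-1}$, so $H$ is counted by the key estimate. But one cannot detect all of $\calM_{\bad}$ this way over a single $\F_q$: a geometrically reducible $\phi^{-1}H$ may be irreducible over $\F_q$, its geometric components forming one Galois orbit, so that $\#(\phi^{-1}H)(\F_q)$ is small; and enlarging the field to split the components reinflates the count by a bounded power, which would ruin the bound. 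The remedy is to parametrize \emph{ordered pairs} of distinct geometric components. Suppose $\dim\calM_{\bad}\ge c+2$. On a dense open $U$ of a top-dimensional component of $\calM_{\bad}\setminus\calS$ the number $r_0\ge 2$ of geometric components of $\phi^{-1}H$ is constant; after shrinking $U$ these components form a finite \'etale cover $U'\to U$, and $\calC:=U'\times_U U'\setminus\Delta$ is finite of degree $r_0(r_0-1)\ge 2$ over $U$, so $\dim\calC\ge c+2$, and a point of $\calC$ over $H$ records an ordered pair of distinct geometric components of $\phi^{-1}H$ defined over the residue field. After a bounded extension of $\F_q$ — which preserves the key estimate — $\calC$ acquires a geometrically irreducible component $\calC_0$ of dimension $\ge c+2$, so $\#\calC_0(\F_{q^j})\ge\tfrac12 q^{j(c+2)}$ for all $j$; yet the image of $\calC_0(\F_{q^j})$ in $\PPdual^n(\F_{q^j})$ is at most $r_0(r_0-1)$-to-one and, outside a set of smaller dimension, consists of $H$ with two distinct geometric components of $\phi^{-1}H$ defined over $\F_{q^j}$, hence of $H$ counted by the key estimate over $\F_{q^j}$; thus $q^{j(c+2)}=O(q^{j(c+1)})$, a contradiction for $q$ large. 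The remaining ingredients — the spreading-out bookkeeping, the algebraic identity for $\mathrm{Var}$, the elementary bound $D=O(q^{2e-m})$, and the construction of the \'etale cover $U'$ — are routine.
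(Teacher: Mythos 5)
Your proposal takes the same core route as the paper --- reduce to a finite field, compute the mean and variance of $\#(\phi^{-1}H)(\F_q)$ over random $H$, observe that the constant-fiber-dimension hypothesis bounds the variance by $O(q^{2\dim X - \dim\phi(X) - 1})$, and apply Chebyshev --- but it diverges precisely at the point you flag as ``the main obstacle'': handling $H$ for which $\phi^{-1}H$ is geometrically reducible yet $\F_q$-irreducible, so that $\#(\phi^{-1}H)(\F_q)$ is \emph{small} rather than large. The paper's device is to declare $H$ ``very bad'' when the number of $\F_q$-irreducible components of $\phi^{-1}H$ that are geometrically irreducible is not $1$; by Lang--Weil this is a \emph{two-sided} deviation criterion, since then $\#(\phi^{-1}H)(\F_q)$ is either $O(q^{r-3/2})$ or $\ge 2q^{r-1}-O(q^{r-3/2})$, so a single application of Chebyshev bounds the number of very bad $H$ by $O(q^{\codim\phi(X)+1})$, and the matching lower bound (Lemma~\ref{L:deviations}) is short: after a quasi-finite dominant base change of the parameter space to split the generic geometric components and a finite extension of the constant field, pick two components $W_1,W_2$, shrink the base, and every $\F_q$-point of it is very bad. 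Your one-sided count of the ``large'' $H$ plus the scheme $\calC$ of ordered pairs of components is a correct alternative, but it is longer: the finite \'etale cover $U'\to U$ of geometric components that you call ``routine'' does require noting that, although $k(U)$ is not perfect, the geometric components of the generic fiber are always defined over a finite \emph{separable} extension of $k(U)$ (the Galois action on the finite set of components is continuous and factors through $\Gal(k(U)^{\mathrm{sep}}/k(U))$), so that after shrinking $U$ the cover is \'etale and its closed fibers genuinely enumerate components; and your final contradiction is for $j\to\infty$ with $q$ fixed, not ``for $q$ large'' as written. With those points repaired the argument goes through. The paper's two-sided ``very bad'' bookkeeping is what buys the shorter endgame, avoiding $\calC$ and the passage from irreducibility of $\phi^{-1}H$ to Galois-orbit combinatorics.
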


\begin{example}
In the setting of Theorem~\ref{T:fibers of constant dimension}, if $\phi$ is dominant,
then the conclusion states that $\dim \calM_{\bad} \le 1$.
\end{example}

Theorem~\ref{T:fibers of constant dimension} can fail if the nonempty fibers of $\phi$ have differing dimensions:

\begin{example}
If $X \to \PP^n$ is the blow-up of a linear subspace $L \subset \PP^n$ with $\codim L \ge 2$, 
then $\calM_{\bad}$ parametrizes the hyperplanes containing $L$, 
so $\dim \calM_{\bad} = \codim L - 1$, 
but $\codim \phi(X) + 1 = 1$
(so the conclusion of Theorem~\ref{T:fibers of constant dimension} fails if $\codim L \ge 3$).
\end{example}

Nevertheless, Theorem~\ref{T:fibers of constant dimension} admits the following generalization.
Fix an algebraic closure $\kbar \supset k$.

\begin{theorem}
\label{T:complicated}
Let $X$ be a geometrically irreducible variety.
Let $\phi \colon X \to \PP^n$ be a morphism.
Let $W$ be the closed subset of $x \in X$ at which the relative dimension $\dim_x \phi$ 
is greater than at the generic point.
Let $W_1,\ldots,W_r$ be the irreducible components of $W_{\kbar}$ of dimension $\dim X - 1$.
Let $\calM_{\bad}$ be the locus of hyperplanes $H$ 
such that $\phi^{-1} H$ is not geometrically irreducible.
Let $\calN$ be the locus of hyperplanes over $\kbar$ that contain $\phi(W_i)$ for some $i$;
since $\{W_1,\ldots,W_r\}$ is Galois-stable, $\calN$ is definable over $k$.
Then $\calM_{\bad}$ differs from $\calN$ 
in a constructible set of dimension at most $\codim \phi(X) + 1$.
\end{theorem}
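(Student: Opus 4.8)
The plan is to deduce Theorem~\ref{T:complicated} from Theorem~\ref{T:fibers of constant dimension}, which is the case of constant fiber dimension. Since $\calM_{\bad}$ and $\calN$ are constructible subsets of $\PPdual^n$ defined over $k$ and the assertion only bounds the dimension of a $k$-constructible set, I would base change everything along $k\to\kbar$ at the outset and argue over $\kbar$, where ``geometrically irreducible'' means simply ``irreducible''. Put $e := \dim X - \dim\overline{\phi(X)}$ (the generic relative dimension of $\phi$) and $U := X\setminus W$. Then $U$ is open and dense in $X$, hence a geometrically irreducible variety with $\overline{\phi(U)} = \overline{\phi(X)}$; and by upper semicontinuity of fiber dimension together with the definition of $W$, the fiber of $\phi$ through any point of $U$ has local dimension exactly $e$ there, so $\phi|_U\colon U\to\PP^n$ satisfies the hypotheses of Theorem~\ref{T:fibers of constant dimension}. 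Applying that theorem to $\phi|_U$ shows that the locus $\calM'_{\bad}$ of hyperplanes $H$ with $\phi^{-1}H\cap U = (\phi|_U)^{-1}H$ not geometrically irreducible satisfies $\dim\calM'_{\bad}\le\codim\phi(X)+1$.

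The heart of the matter is to compare the irreducibility of $\phi^{-1}H$ with that of $\phi^{-1}H\cap U$. The key point is that $\phi^{-1}H$ is the zero scheme of a section of $\phi^*\calO_{\PP^n}(1)$, so when it is neither empty nor all of $X$ each of its irreducible components has dimension $\ge\dim X-1$ by Krull; as $W$ is a proper closed subset of $X$, a component of $\phi^{-1}H$ contained in $W_{\kbar}$ must be an irreducible component of $W_{\kbar}$ of dimension $\dim X-1$, i.e.\ one of the $W_i$, whence $\phi(W_i)\subseteq H$ and $H\in\calN$. Hence, for $H\notin\calN$ with $\phi^{-1}H$ neither empty nor equal to $X$, every component of $\phi^{-1}H$ meets $U$, so $\phi^{-1}H\cap U$ is a nonempty dense open subscheme of $\phi^{-1}H$; therefore $\phi^{-1}H$ is irreducible iff $\phi^{-1}H\cap U$ is, that is, $H\in\calM_{\bad}\iff H\in\calM'_{\bad}$. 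Conversely, if $H\in\calN$ while $\phi^{-1}H$ is irreducible and $\ne X$, then $\phi^{-1}H$ is an irreducible set of dimension $\dim X-1$ containing some $W_i$ of the same dimension, so $\phi^{-1}H=W_i\subseteq W$; then $\phi^{-1}H\cap U=\emptyset$, i.e.\ $H$ is disjoint from the dense constructible subset $\phi(U)$ of $\overline{\phi(X)}$.

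Combining the two directions (and noting $\phi^{-1}H=\emptyset$ forces $H$ to miss $\phi(X)$, while $\phi^{-1}H=X$ forces $\overline{\phi(X)}\subseteq H$), one obtains over $\kbar$ the containment $\calM_{\bad}\,\triangle\,\calN \subseteq \calM'_{\bad}\cup\calZ_1\cup\calZ_2$, where $\calZ_1 := \{H : \overline{\phi(X)}\subseteq H\}$ and $\calZ_2 := \{H : H\cap\overline{\phi(X)}\subseteq C\}$ for a suitable proper closed $C\subsetneq\overline{\phi(X)}$ (one may take $C = \overline{\overline{\phi(X)}\setminus\phi(U)}$). It then remains to check $\dim\calZ_1,\dim\calZ_2\le\codim\phi(X)$. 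For $\calZ_1$ this is immediate: the hyperplanes through a fixed subvariety $Y\subseteq\PP^n$ form a linear subspace of $\PPdual^n$ of dimension $n-1-\dim\langle Y\rangle\le\codim_{\PP^n}Y-1$. For $\calZ_2$, observe that $H\in\calZ_2$ cannot contain $Y := \overline{\phi(X)}$ (else $Y=H\cap Y\subseteq C$), so $H\cap Y$ is pure of codimension $1$ in $Y$ and contained in $C$; thus every component of $H\cap Y$ is a $(\dim Y-1)$-dimensional irreducible subvariety of $C$, hence one of the finitely many top-dimensional components $D_1,\dots,D_s$ of $C$, and in particular $D_j\subseteq H$ for some $j$. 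So $\calZ_2\subseteq\bigcup_j\{H : D_j\subseteq H\}$, and each term has dimension $\le\codim_{\PP^n}D_j=\codim\phi(X)$ because $\dim D_j=\dim Y-1$.

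The step I expect to require the most care is the comparison in the second paragraph: pinning down exactly which hyperplanes realize the discrepancy between $\calM_{\bad}$ and $\calN$ that is not already inside $\calM'_{\bad}$, together with the bookkeeping for the boundary situations (empty fibers, the case $\phi^{-1}H=X$, and reduced versus scheme structures on the fibers). None of this is deep, however — every exceptional hyperplane not captured by $\calM'_{\bad}$ falls into $\calZ_1$ or $\calZ_2$ — and the one substantive input, Theorem~\ref{T:fibers of constant dimension} together with its finite-field/Lang--Weil proof, is already available.
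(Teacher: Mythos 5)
Your argument follows essentially the same route as the paper's proof: apply Theorem~\ref{T:fibers of constant dimension} to $\phi|_{X\setminus W}$, use the Krull bound to force every component of a nontrivial $\phi^{-1}H$ to have dimension $\dim X-1$, and then identify the residual discrepancy between $\calM_\bad$ and $\calN$ with the $W_i$ together with the low-dimensional locus $\calZ_1$ of hyperplanes containing $\phi(X)$. The differences are purely organizational; in particular your extra class $\calZ_2$ is redundant, since the situations $\phi^{-1}H\cap U=\emptyset$ (including $\phi^{-1}H=\emptyset$) already lie in $\calM'_\bad$ under the paper's convention that the empty scheme is not irreducible. One small bookkeeping slip: you wrote $\codim_{\PP^n}D_j=\codim\phi(X)$, but in fact $\codim_{\PP^n}D_j=\codim\phi(X)+1$; however, the hyperplanes through $D_j$ have dimension at most $\codim_{\PP^n}D_j-1$, so your stated bound $\dim\calZ_2\le\codim\phi(X)$ is still correct.
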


\subsection{Monodromy}

A generically \'etale morphism $\phi \colon X \to Y$ between integral varieties
has a monodromy group $\Mon(\phi)$, defined as the Galois group of the Galois closure of 
the function field extension $k(X)/k(Y)$;
see Section~\ref{S:monodromy} for a more general definition requiring only $Y$ to be integral.
Now suppose $Y \subset \PP^n$.
For a hyperplane $H \subset \PP^n$, 
let $\phi_H$ be the restriction $\phi^{-1}(H \intersect Y) \to H \intersect Y$.
The following theorem states that for all $H \in \PPdual^n$ outside a low-dimensional locus, 
$\Mon(\phi_H) \isom \Mon(\phi)$.

\begin{theorem}
\label{T:monodromy}
Let $\phi \colon X\to Y$ be a generically \'etale morphism with $Y$ an integral subvariety of 
$\PP^n$ 
over an algebraically closed field.
 Let $\calM_\good \subset \PPdual^n$ be the locus parametrizing hyperplanes $H$ such that 
\begin{enumerate}[\upshape (i)]
\item \label{I:H intersect Y is irreducible} 
$H \intersect Y$ is irreducible;
\item \label{I:normal and finite etale} 
the generic point of $H\cap Y$ has a neighborhood $U$ in $Y$ 
such that $U$ is normal and $\phi^{-1} U \to U$ is finite \'etale; and
\item \label{I:inclusion is isomorphism} 
the inclusion $\Mon(\phi_H) \injects \Mon(\phi)$ is an isomorphism. 
\end{enumerate}
Then the locus $\calM_\bad \colonequals \PPdual^n - \calM_\good$
is a constructible set of dimension at most 
$\codim Y+1$.
\end{theorem}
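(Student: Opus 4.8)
The plan is to exhibit $\calM_\bad$ as contained in a union $\calM_1 \union \calM_2 \union \calM_3$ of three constructible sets, one associated to each of the conditions~(i)--(iii) defining $\calM_\good$, each of dimension at most $\codim Y + 1$. Condition~(i) is handled directly: since $k$ is algebraically closed and $Y$ is integral, $Y$ is geometrically irreducible, so Theorem~\ref{T:subvariety} applied to $Y \subseteq \PP^n$ shows that the locus $\calM_1 \subseteq \PPdual^n$ of $H$ with $H \intersect Y$ not irreducible has $\dim \calM_1 \le \codim Y + 1$. For the other two conditions we first pass to a Galois cover: since $\phi$ is generically \'etale, there is a dense open $U \subseteq Y$ such that $U$ is normal, $\phi^{-1}U \to U$ is finite \'etale, and the normalization $\tilde U$ of $U$ in the Galois closure of $k(X)/k(Y)$ is finite \'etale over $U$. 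Then $\tilde U \to U$ is a connected finite \'etale Galois cover with group $G \isom \Mon(\phi)$, and $\phi^{-1}U \isom \tilde U/G_x$ for the stabilizer $G_x \subseteq G$ of a point of the generic fiber of $\phi$. Put $Z \colonequals Y \setminus U$, a closed subset of dimension at most $\dim Y - 1$.

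Let $\calM_2 \subseteq \PPdual^n$ be the locus of $H$ such that $H \intersect Y$ is irreducible but its generic point $\eta$ does not lie in $U$; this contains every $H$ satisfying~(i) but not~(ii), since $U$ itself is normal with $\phi^{-1}U \to U$ finite \'etale and would witness~(ii) if $\eta \in U$. For $H \in \calM_2$ we have $H \intersect Y = \overline{\{\eta\}} \subseteq Z$, so $H \not\supseteq Y$ and hence $\dim(H \intersect Y) = \dim Y - 1 \ge \dim Z$; thus $H \intersect Y$ is an irreducible component $Z_i$ of $Z$ of dimension $\dim Y - 1$, and in particular $H \supseteq Z_i$. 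Therefore $\calM_2 \subseteq \Union_i \{H : H \supseteq Z_i\}$, the union over the components $Z_i$ of $Z$ of dimension $\dim Y - 1$, and each $\{H : H \supseteq Z_i\}$ is a linear subspace of $\PPdual^n$ of dimension $n - 1 - \dim\langle Z_i\rangle \le n - 1 - \dim Z_i = \codim Y$, because the linear span of an irreducible variety has dimension at least that of the variety. Hence $\dim \calM_2 \le \codim Y$.

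Condition~(iii) is the main point. Let $H \notin \calM_1 \union \calM_2$, so $H \intersect Y$ is irreducible with generic point $\eta \in U$; then~(i) and~(ii) hold, and it remains to treat~(iii). Set $C \colonequals H \intersect Y \intersect U$, a nonempty---hence irreducible---open subscheme of $H \intersect Y$. Pulling the $G$-Galois cover back along $C \injects U$ gives a $G$-Galois cover $\tilde U_C \colonequals \tilde U \times_U C \to C$, classified by the composite $\pi_1(C) \to \pi_1(U) \surjects G$; write $G_H \subseteq G$ for its image, so that $\tilde U_C$ is irreducible if and only if $G_H = G$. Since $\phi^{-1}C \isom \tilde U_C / G_x$, the fiber of $\phi^{-1}C \to C$ over a geometric point is the $G$-set $G/G_x$, and $\Mon(\phi_H)$---the image of $\pi_1(C)$ in $\Sym(G/G_x)$---is the subgroup $G_H$ of $G$; unwinding the definition of $\Mon(\phi_H)$ from Section~\ref{S:monodromy}, the inclusion $\Mon(\phi_H) \injects \Mon(\phi)$ is thereby identified with $G_H \injects G$, which is an isomorphism precisely when $\tilde U_C$ is irreducible. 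Finally, the composite $\tilde U \to U \injects \PP^n$ is quasi-finite, so all its nonempty fibers have dimension $0$; moreover $\tilde U$ is (geometrically) irreducible and its image in $\PP^n$ has closure $Y$. Since $\tilde U_C = \tilde U \times_{\PP^n} H$, Theorem~\ref{T:fibers of constant dimension} applied to this morphism shows that the locus $\calM_3$ of $H$ for which $\tilde U_C$ is not irreducible has $\dim \calM_3 \le \codim Y + 1$.

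Combining the three steps, if $H \notin \calM_1 \union \calM_2 \union \calM_3$ then~(i) and~(ii) hold by the third paragraph and, since $H \notin \calM_3$, so does~(iii); thus $H \in \calM_\good$, so $\calM_\bad \subseteq \calM_1 \union \calM_2 \union \calM_3$ and $\dim \calM_\bad \le \codim Y + 1$. Finally, $\calM_\bad$ is constructible because $\calM_\good$ is cut out by the constructible conditions~(i),~(ii),~(iii)---for~(iii) one uses that the index $[G : G_H]$, equivalently the number of connected components of $\tilde U_C$, is a constructible function of $H$. The step requiring genuine care is~(iii): making precise, via the monodromy formalism of Section~\ref{S:monodromy}, the equivalence ``$\Mon(\phi_H) \isom \Mon(\phi)$ if and only if $\tilde U_C$ is irreducible'', and---crucially---applying the clean Theorem~\ref{T:fibers of constant dimension} to the quasi-finite morphism $\tilde U \to \PP^n$ rather than to $\tilde X \to \PP^n$, whose fibers can jump in dimension and would instead require the more delicate Theorem~\ref{T:complicated}.
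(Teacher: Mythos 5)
Your proposal follows essentially the same route as the paper: discard the $H$ failing (i) via Theorem~\ref{T:subvariety}, discard the $H$ failing (ii) (you bound this locus directly by observing that such $H$ must contain a top-dimensional component of $Y \setminus U$, whereas the paper simply applies Theorem~\ref{T:subvariety} a second time to the largest normal open $V$ over which $\phi$ is finite \'etale---both give the needed bound), and then reduce condition (iii) to the irreducibility of the pullback of $H$ to a connected Galois cover and apply Theorem~\ref{T:fibers of constant dimension} to that quasi-finite cover mapping to $\PP^n$. One small caveat: the theorem assumes only that $Y$ is integral, not that $X$ is, so your $\tilde U$ defined as the normalization of $U$ in ``the Galois closure of $k(X)/k(Y)$'' is not literally available when $X$ is reducible; the paper sidesteps this by taking a connected component of the Galois scheme $\GS(\phi^{-1}U/U)$, which agrees with your $\tilde U$ when $X$ is irreducible, and with that substitution your argument goes through unchanged. (Relatedly, your identification of $\Mon(\phi_H)$ with the image of $\pi_1(C) \to G$ is slightly off---it is the image of $G_{k(H\cap Y)}$, which sits inside the image of $\pi_1(C)$---but the equivalence you actually use, namely $\Mon(\phi_H) \isom G$ iff $\tilde U_C$ is irreducible, is correct and is precisely what the paper proves via Lemma~\ref{L:Lemma_Irr_open} and Corollary~\ref{C:the_finite_etale_case_mon_of_fibers}.)
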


\begin{remark} 
Conditions \eqref{I:H intersect Y is irreducible} and~\eqref{I:normal and finite etale}
are needed to define the inclusion in~\eqref{I:inclusion is isomorphism}: 
see Section~\ref{S:monodromy}.
\end{remark}

\begin{example}
Let $k$ be an algebraically closed field of characteristic not $2$,
let $X = \Spec k[x_1,\ldots,x_n,y]/(y^2-x_1)$,
and let $\phi \colon X \to \Aff^n_k\subset\PP^n$ be the projection to 
$\Aff^n_k = \Spec k[x_1,\ldots,x_n]$.
Then $\calM_\bad$ is the $1$-dimensional locus consisting of the 
hyperplanes $x_1=a$ for $a \in k$.
\end{example}

\subsection{Structure of the article}
After a brief notation section, 
Theorem~\ref{T:fibers of constant dimension} is proved 
in Sections \ref{S:reduction to finite fields} to~\ref{S:counting};
see especially Lemma~\ref{L:very bad hyperplanes}.
We apply it to prove Theorems \ref{T:complicated} and~\ref{T:monodromy}
in Sections \ref{S:complicated} and~\ref{S:monodromy}, respectively.
The heart of our paper is 
the random slicing in Section~\ref{S:slicing}
and its application towards irreducibility in Section~\ref{S:counting}.

\section{Notation}

The empty scheme is not irreducible.
For a noetherian scheme $X$, let $\Irr X$ be the set of irreducible components of $X$.
If $X$ is an irreducible variety, 
let $k(X)$ be the function field of the associated reduced subscheme $X_{\red}$. 
The dimension of a constructible subset $C$ of a variety $V$ (viewed as a topological subspace)
equals the maximal dimension of a subvariety of $V$ contained in $C$;
then $\codim C \colonequals \dim V - \dim C$.

Let $S$ be a scheme, and let $X$ be an $S$-scheme.
Given a morphism of schemes $T \to S$, let $X_T$ denote $X \times_S T$;
in this context, if $T=\Spec A$, we may write $A$ instead of $\Spec A$.
If $s \in S$, let $X_s$ be the fiber of $X \to S$ above $s$.
If moreover $C$ is a constructible subset of $X$, then $C_T$ denotes the inverse image of $C$
under $X_T \to X$, and $C_s$ is defined similarly.

For a finite-type morphism $\phi \colon X\to Y$ of noetherian schemes with $Y$ irreducible,
$\phi$ is \'etale over the generic point of $Y$ if and only if
$\phi$ is \'etale over some dense open $V \subset Y$;
in this case, call $\phi$ \defi{generically \'etale}.

\section{Reduction to finite fields}
\label{S:reduction to finite fields}

We begin the proof of Theorem~\ref{T:fibers of constant dimension} 
by reducing to the case of a finite field.
There exists a finitely generated $\Z$-algebra $R \subset k$ 
such that $\phi \colon X \to \PP^n_k$ is the base change of 
a separated finite-type morphism (denoted using the same letters)
$\phi \colon X \to \PP^n_R$.
In the new notation, the original morphism is $\phi_k \colon X_k \to \PP^n_k$.
By shrinking $\Spec R$ if necessary, we may assume that for each $\pp \in \Spec R$, 
the fiber $X_\pp$ is geometrically irreducible \cite{EGA-IV.III}*{9.7.7(i)}, 
the nonempty fibers of $\phi_\pp$ all have the same dimension \cite{EGA-IV.III}*{9.2.6(iv)}, 
and $\dim \phi_\pp(X_\pp)=\dim \phi_k(X_k)$. 

Let $\calM_{\bad} \subset \PPdual^n_R$ be the subset parametrizing hyperplanes $H$ such that
$\phi^{-1} H$ is not geometrically irreducible; 
since the $\phi^{-1} H$ are the fibers of a family,
$\calM_{\bad}$ is constructible  \cite{EGA-IV.III}*{9.7.7(i)}.
If we prove Theorem~\ref{T:fibers of constant dimension} for a finite ground field,
so that $\dim \, (\calM_\bad)_\pp \le \codim \phi_\pp(X_\pp) + 1$ for every closed point $\pp$, 
then $\dim \, (\calM_{\bad})_k \le \codim \phi_k(X_k) + 1$ too.
Therefore from now on we assume that $k$ is finite.

\section{Random hyperplane slicing}
\label{S:slicing}

The following lemma is purely set-theoretic; for the time being, $X$ is just a set.

\begin{lemma}
\label{var_bound}
Let $\phi\colon X\to\PP^n(\F_q)$ be a map of sets for some $n \ge 1$.
Let $s$ be an upper bound on the size of its fibers.
For a $($set-theoretic$)$ hyperplane $H\subset\PP^n(\F_q)$ chosen uniformly at random, 
define the random variable $Z \colonequals \# (\phi^{-1} H)$. 
Then its mean $\mu$ and variance $\sigma^2$ satisfy
\begin{align*}
	\mu &= \#X \, (q^{-1} + O(q^{-2}))  \\
	\sigma^2 &= O( \#(\phi(X)) \, s^2 q^{-1}  ).
\end{align*}
\end{lemma}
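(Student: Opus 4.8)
The plan is to compute $\mu$ and $\sigma^2$ directly by expanding over the random choice of hyperplane and using indicator functions. Write $Z = \sum_{x \in X} \mathbf{1}[\phi(x) \in H]$, and recall that the number of hyperplanes in $\PP^n(\F_q)$ is $N := \#\PPdual^n(\F_q) = (q^{n+1}-1)/(q-1)$, while for a fixed point $P \in \PP^n(\F_q)$ the number of hyperplanes through $P$ is $M := (q^n-1)/(q-1)$, and for two distinct points $P \ne P'$ the number of hyperplanes through both is $L := (q^{n-1}-1)/(q-1)$. Thus for the uniformly random $H$, $\Prob[P \in H] = M/N$ and $\Prob[P, P' \in H] = L/N$, and both of these are $q^{-1} + O(q^{-2})$ (the key point being $n \ge 1$, so these quantities make sense and the ratios have the stated expansions).

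For the mean, linearity of expectation gives $\mu = \sum_{x \in X} \Prob[\phi(x) \in H] = \#X \cdot (M/N) = \#X\,(q^{-1} + O(q^{-2}))$, which is the first claim. For the variance I would use $\sigma^2 = \E[Z^2] - \mu^2$ and expand $\E[Z^2] = \sum_{x,y \in X} \Prob[\phi(x) \in H \text{ and } \phi(y) \in H]$. Split the double sum according to whether $\phi(x) = \phi(y)$ or not. When $\phi(x) \ne \phi(y)$ the summand is $L/N = q^{-1} + O(q^{-2})$; when $\phi(x) = \phi(y)$ it is $M/N = q^{-1} + O(q^{-2})$. The diagonal-type contribution (pairs with $\phi(x) = \phi(y)$) is bounded by $\#X \cdot s \cdot (M/N)$ since each fiber has size at most $s$; I will need to keep this term because it does not cancel against $\mu^2$. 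The off-diagonal contribution is $(\#X^2 - (\text{number of collision pairs}))\cdot (L/N)$, and $\mu^2 = \#X^2 (M/N)^2$, so the main cancellation is between $\#X^2 (L/N)$ and $\#X^2 (M/N)^2$: one checks $L/N - (M/N)^2 = O(q^{-3})$ (both are $q^{-1} + O(q^{-2})$, and in fact the $q^{-1}$ and $q^{-2}$ terms match), giving a contribution $O(\#X^2 q^{-3})$. Here I must also control the collision-pair correction to the off-diagonal sum, which is again at most $\#X \cdot s$ times $O(q^{-1})$.

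The main obstacle — and the only place requiring care — is bookkeeping the error terms so that the final bound is expressed in terms of $\#(\phi(X))$ rather than $\#X$. The trick is that $\#X \le s\cdot\#(\phi(X))$, so $\#X^2 q^{-3} \le s^2 \#(\phi(X))^2 q^{-3}$; combined with $\#(\phi(X)) \le N = O(q^n)$ one still needs this to be $O(\#(\phi(X)) s^2 q^{-1})$, which holds because $\#(\phi(X)) q^{-3} \le \#(\phi(X)) q^{-1}$ trivially — actually the cleaner route is to observe $\#X^2 q^{-3} \le (s\,\#(\phi(X)))\cdot(s\,\#(\phi(X)))\, q^{-3}$ and bound one factor of $\#(\phi(X))q^{-1}$ by $O(1)$ only if $n=1$; in general I would instead note $\#(\phi(X)) \le q^{n+1}$ is not small, so the honest bound to aim for keeps $\#X^2 q^{-3}$ and rewrites it as $\#X\cdot\#X\cdot q^{-3} \le \#X \cdot s\,\#(\phi(X))\cdot q^{-3} \le \#(\phi(X))\,s^2\,q^{-3}\cdot(\#(\phi(X))) $ — so the correct accounting is that the leading variance term is genuinely $O(\#(\phi(X))^2 q^{-3} s^2)$ plus the diagonal $O(\#(\phi(X)) s^2 q^{-1})$, and one absorbs the former into the latter using $\#(\phi(X)) \le q^{n+1}$ together with $n\ge 1$ giving $\#(\phi(X))^2 q^{-3} \le \#(\phi(X))\, q^{n-2} \le \#(\phi(X)) q^{-1}\cdot q^n$; so in fact the right statement needs $\#(\phi(X)) q^{-2} = O(1)$, which fails in general. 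I would therefore re-derive $L/N - (M/N)^2$ exactly, expecting the difference to be $O(q^{-n-2})$ rather than merely $O(q^{-3})$, so that $\#X^2 \cdot O(q^{-n-2}) \le s^2 \#(\phi(X))^2 q^{-n-2} \le s^2 \#(\phi(X)) q^{-1}$ using $\#(\phi(X)) \le q^{n+1}$. This sharper estimate on $L/N-(M/N)^2$ is the crux; everything else is elementary counting with geometric-series identities.
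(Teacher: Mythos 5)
Your overall strategy is the same as the paper's: compute $\mu$ by linearity, expand $\sigma^2 = \E Z^2 - (\E Z)^2$ as a sum over pairs, and split according to whether $\phi(x) = \phi(y)$. But there are concrete errors in the execution, and you are missing the observation that makes the argument clean.

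First, a small slip: for two distinct points you write that $L/N$ is $q^{-1} + O(q^{-2})$, but $L/N = \frac{q^{n-1}-1}{q^{n+1}-1} = q^{-2} + O(q^{-3})$. (I suspect this is a typo since you later implicitly compare it to $(M/N)^2$, which is also of order $q^{-2}$.) More seriously, your guess that $L/N - (M/N)^2 = O(q^{-n-2})$ is off by one power of $q$. A direct computation gives
\[
(M/N)^2 - L/N = p_1^2 - p_2 = \frac{(q^n-1)^2 - (q^{n-1}-1)(q^{n+1}-1)}{(q^{n+1}-1)^2} = \frac{q^{n-1}(q-1)^2}{(q^{n+1}-1)^2},
\]
which is $\Theta(q^{-n-1})$, not $O(q^{-n-2})$. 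Your final accounting — which uses $\#(\phi(X)) \le q^{n+1}$ together with the hoped-for $q^{-n-2}$ — therefore does not close as written. It \emph{can} be rescued: with the correct $\Theta(q^{-n-1})$ and the slightly sharper bound $\#(\phi(X)) \le \#\PP^n(\F_q) = O(q^n)$, you get $\#X^2 \cdot q^{-n-1} \le s^2 \#(\phi(X))^2 q^{-n-1} = O(s^2\,\#(\phi(X))\, q^{-1})$, which is what you want. But you would need to make both corrections.

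The observation you are missing, and which the paper uses, makes all of this bookkeeping unnecessary: the displayed formula shows $p_1^2 - p_2 > 0$, i.e., the off-diagonal pairs contribute \emph{negatively} to the variance. So one simply discards them when bounding $\sigma^2$ from above, and is left with the diagonal contribution $\sum_{\phi(u)=\phi(v)} (p_1 - p_1^2) \le \sum_{\phi(u)=\phi(v)} p_1 \le \#(\phi(X))\, s^2\, p_1$, which is already the claimed bound. This sign observation is the crux of the lemma, and it replaces your delicate cancellation analysis with a one-line inequality.
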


\begin{proof}
For any $y \in \PP^n(\F_q)$, define
\[
	p_1 \colonequals \Prob(y \in H) = \frac{q^n-1}{q^{n+1}-1} = q^{-1} + O(q^{-2}).
\]
Similarly, for any \emph{distinct} $y,z \in \PP^n(\F_q)$, define
\[
	p_2 \colonequals \Prob(y,z \in H) = \frac{q^{n-1}-1}{q^{n+1}-1} = q^{-2} + O(q^{-3}).
\]
The mean of $Z$ is
\begin{align*}
	\mu &= \E Z = \sum_{x \in X} \Prob(\phi(x) \in H) = (\#X) \, p_1 = \#X \, (q^{-1} + O(q^{-2})), \\
\intertext{and the variance is}
	\sigma^2 &= \E(Z^2) - (\E Z)^2 \\
	&= \sum_{u,v \in X} \biggl( \Prob(\phi(u),\phi(v) \in H) - \Prob(\phi(u) \in H) \; \Prob(\phi(v) \in H) \biggr) \\
	&= \sum_{\phi(u)=\phi(v)} \left( p_1 - p_1^2 \right)
		+ \sum_{\phi(u) \ne \phi(v)} \left( p_2 - p_1^2 \right) \\
	&\le \sum_{\phi(u)=\phi(v)} p_1 + \sum_{\phi(u) \ne \phi(v)} 0 
	   \quad \textup{(we have $p_2 < p_1^2$ since $(q^{n+1}-1)^2 (p_1^2 - p_2) = q^{n-1}(q-1)^2$)} \\
	&\le \sum_{y \in \phi(X)} \#(\phi^{-1} y)^2 \, p_1 \\
	&=  O ( \#(\phi(X)) \, s^2 q^{-1} ).\qedhere
\end{align*}
\end{proof}

\section{The Lang--Weil bound}
\label{S:Lang-Weil}

We will apply Lemma~\ref{var_bound} when $\phi$ comes from a morphism of varieties over $\F_q$, 
so we need bounds on the number of $\F_q$-points of a variety.
Throughout the rest of this paper, 
the implied constant in a big-$O$ depends on the geometric complexity\footnote{Say that a variety is of complexity $\le M$ if it is a union of $\le M$ open subschemes, each cut out by $\le M$ polynomials of degree $\le M$ in $\Aff^n$ for some $n \le M$, with each pair of subschemes glued along $\le M$ open sets $D(f)$ with each $f$ given by a polynomial of degree $\le M$, with each gluing isomorphism defined by polynomials of degree $\le M$.}
but not on $q$.

\begin{theorem}[\cite{Lang-Weil1954}]
\label{T:Lang-Weil}
Let $X$ be a variety over $\F_q$.
Let $r=\dim X$.
\begin{enumerate}[\upshape (a)]
    \item \label{I:crude bound}
       We have $\# X(\F_q) = O(q^r)$.
    \item \label{I:geometrically irreducible}
       If $X$ is geometrically irreducible, then $\# X(\F_q) = q^r + O(q^{r-1/2})$.
    \item \label{I:a components} 
      More generally, if $a$ is the number of irreducible components of $X$ 
      that are geometrically irreducible of dimension $r$, then $\# X(\F_q) = aq^r + O(q^{r-1/2})$.
\end{enumerate}
\end{theorem}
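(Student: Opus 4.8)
The plan is to prove the three parts in order, bootstrapping from~(a) to control error terms and using the random slicing of Lemma~\ref{var_bound} to reduce~(b) to the one-dimensional case. Throughout I use that $X$ and $X_{\red}$ have the same $\F_q$-points, underlying space, dimension, and components, and that restricting to the finitely many charts of a bounded affine cover changes neither $\#X(\F_q)$ nor any complexity bound; so I may always take $X$ affine and reduced. For~(a) I would induct on $r=\dim X$: the case $r=0$ is clear since $\#X(\F_q)$ is bounded by the number of points, and for $r\ge 1$, after decomposing into irreducible components (the pairwise intersections have dimension $\le r-1$, hence contribute $O(q^{r-1})$ by induction) I may assume $X\subseteq\Aff^N$ is integral of dimension $r$; choosing a coordinate whose image in the coordinate ring is transcendental over $\F_q$ gives a dominant map $\pi\colon X\to\Aff^1$ whose fibers over the $q$ points of $\Aff^1(\F_q)$ are proper closed subvarieties of $X$, of dimension $\le r-1$ and of complexity bounded independently of the point, so each has $O(q^{r-1})$ rational points and summing yields $\#X(\F_q)=O(q^r)$.

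For~(b), I may assume $X$ is locally closed in some $\PP^n$ and, after replacing $\PP^n$ by the linear span of $X$, not contained in any hyperplane; then induct on $r$. The base case $r=1$ is the Hasse--Weil bound for curves: normalizing the closure of $X$ changes $\#X(\F_q)$ by $O(1)$ and produces a smooth projective geometrically irreducible curve whose genus is bounded in terms of the complexity of $X$, so $\#X(\F_q)=q+O(q^{1/2})$. For $r\ge 2$, I would apply Lemma~\ref{var_bound} to the injection $X(\F_q)\injects\PP^n(\F_q)$ (all fibers of size $1$): for a uniformly random hyperplane $H$, the quantity $Z\colonequals\#(X\intersect H)(\F_q)$ has mean $\mu=\#X(\F_q)\,(q^{-1}+O(q^{-2}))$. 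By the classical Bertini irreducibility theorem over an arbitrary field (which gives only that the good locus is dense open, not the sharp bound of Theorem~\ref{T:subvariety}), the $H$ for which $X\intersect H$ is not geometrically irreducible of dimension $r-1$ form a proper closed subset of $\PPdual^n$, so such ``bad'' $H$ occur with probability $O(q^{-1})$; for these, $X\intersect H$ still has dimension $\le r-1$ since no hyperplane contains $X$, so $Z=O(q^{r-1})$ by~(a), while for ``good'' $H$ the inductive hypothesis gives $Z=q^{r-1}+O(q^{r-3/2})$. Hence $\mu=q^{r-1}+O(q^{r-3/2})$, and comparing this with $\mu=\#X(\F_q)\,(q^{-1}+O(q^{-2}))$ --- using~(a) to absorb the $O(q^{-2})$ term --- gives $\#X(\F_q)\,q^{-1}=q^{r-1}+O(q^{r-3/2})$, that is, $\#X(\F_q)=q^r+O(q^{r-1/2})$. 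Only the mean from Lemma~\ref{var_bound} enters here; the variance bound is reserved for the later irreducibility count.

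Part~(c) then follows by decomposing $X$ into irreducible components: the components of dimension $<r$ and all pairwise intersections contribute $O(q^{r-1})$ by~(a); a dimension-$r$ component that is irreducible over $\F_q$ but geometrically reducible has its $\F_q$-points lying on the pairwise intersections of its transitively Galois-permuted geometric components, a subvariety of dimension $<r$, hence contributes $O(q^{r-1})$; and each of the $a$ geometrically irreducible dimension-$r$ components contributes $q^r+O(q^{r-1/2})$ by~(b). The one genuine obstacle is the base case of~(b): the curve estimate is equivalent in strength to the Riemann hypothesis for function fields, and rather than reprove it (e.g.\ via Stepanov--Bombieri) I would simply cite it. The remaining points need only routine care: uniformity of all implied constants in the geometric complexity (the generic projection in~(a), the Bertini bad locus in~(b), the genus bound in the curve case), and the fact that the ``good'' condition is geometric and descends, so an $\F_q$-rational good hyperplane does yield a geometrically irreducible section of bounded complexity to which the inductive hypothesis applies.
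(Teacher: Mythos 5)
Your proposal is correct in outline but takes a genuinely different route from the paper's. The paper's entire proof consists of citing \cite{Lang-Weil1954}, Lemma~1 and Theorem~1, for parts~(a) and~(b), and then deriving~(c) exactly as you do: an $\F_q$-irreducible but geometrically reducible top-dimensional component has its $\F_q$-points confined to the lower-dimensional intersections of its conjugate geometric components. You instead reprove~(a) by induction on dimension via a dominant projection to $\Aff^1$ and reprove~(b) by a second induction built on the mean formula of Lemma~\ref{var_bound} together with classical Bertini irreducibility, with Hasse--Weil for curves as the base case --- this is essentially Tao's random-slicing proof of Lang--Weil, which the paper's introduction cites as the inspiration for its whole method, so your argument pleasantly shows that the paper's central technique recovers its own input theorem. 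The trade-offs: your route is more conceptually unified with the rest of the paper but considerably longer; it still imports two deep external facts (the curve bound, and classical Bertini, which must be used in a form not itself resting on Lang--Weil to avoid circularity --- e.g.\ Jouanolou's algebraic proof, so there is no actual circularity); and it carries a genuine uniformity burden that you flag but do not discharge, namely bounding the complexity (degree) of the Bertini bad locus uniformly in terms of that of $X$ so that the implied constants in the induction stay under control --- this needs an effective or constructible-in-families Bertini statement and is not quite ``routine care.'' The paper's one-line citation is shorter and absorbs all of this uniformity into the cited result.
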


\begin{proof}
Parts (a) and~(b) are Lemma~1 and Theorem~1 in \cite{Lang-Weil1954}.
As is well-known, (c) follows from (a) and~(b), 
since if $Z$ is an irreducible component that is not geometrically irreducible, 
then $Z(\F_q)$ is contained in the intersection of the geometric components of $Z$, 
which is of lower dimension.
\end{proof}

\section{Counting very bad hyperplanes}
\label{S:counting}

Consider a finite field $\F_q \supset k$. 
Call a hyperplane $H\in\PPdual^n(\F_q)$ \defi{very bad} if the number of \hbox{$\F_q$-irreducible} components of $\phi^{-1}H$ that are geometrically irreducible is not $1$.
The bound on the variance in Lemma~\ref{var_bound}
will bound the number of very bad hyperplanes,
because each such hyperplane contributes noticeably to the variance.

\begin{lemma} 
\label{L:very bad hyperplanes}
Let $X$ be a geometrically irreducible variety over a finite field $\F_q$ with a morphism $\phi\colon X\to\PP^n$
whose nonempty fibers are all of the same dimension.
Then the number of very bad hyperplanes in $\PPdual^n(\F_q)$ is
$O(q^{\codim \phi(X) +1})$.
\end{lemma}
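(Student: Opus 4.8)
The plan is to apply the variance bound of Lemma~\ref{var_bound} to the map of sets $\phi \colon X(\F_q) \to \PP^n(\F_q)$ and to extract a bound on the number of very bad hyperplanes by showing that each contributes a definite amount to the variance $\sigma^2$. Write $d = \dim X$, $e = \dim \phi(X)$, and let $f = d - e$ be the common relative dimension of the nonempty fibers of $\phi$ (using the constant-fiber-dimension hypothesis). By Theorem~\ref{T:Lang-Weil}\eqref{I:crude bound}, the fibers of $\phi$ on $\F_q$-points have size $O(q^f)$, so we may take $s = O(q^f)$ in Lemma~\ref{var_bound}. Also $\#(\phi(X)) = O(q^e)$ and, since $X$ is geometrically irreducible, $\#X = \#X(\F_q) = q^d + O(q^{d-1/2})$ by Theorem~\ref{T:Lang-Weil}\eqref{I:geometrically irreducible}. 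Plugging into Lemma~\ref{var_bound} gives
\[
  \mu = q^{d-1} + O(q^{d-3/2}), \qquad \sigma^2 = O(q^e \cdot q^{2f} \cdot q^{-1}) = O(q^{2d-e-1}).
\]

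Next I would analyze the contribution of a single very bad hyperplane $H$ to $Z = \#(\phi^{-1}H)$. The fiber $\phi^{-1}H$ is the preimage under $\phi$ of the $(e-1)$-dimensional (or empty) section $H \cap \phi(X)$; by the constant relative dimension hypothesis its nonempty components have dimension $(e-1) + f = d - 1$ (or $\phi^{-1}H$ is empty). Let $a(H)$ be the number of geometrically irreducible $\F_q$-components of $\phi^{-1}H$ of dimension $d-1$. By Theorem~\ref{T:Lang-Weil}\eqref{I:a components}, $Z = \#(\phi^{-1}H)(\F_q) = a(H)\, q^{d-1} + O(q^{d-3/2})$. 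For $H$ very bad we have $a(H) \ne 1$, so $a(H) = 0$ or $a(H) \ge 2$, and in either case $|Z - q^{d-1}| \ge q^{d-1} + O(q^{d-3/2})$, i.e. $|Z - \mu| \gg q^{d-1}$ for $q$ large (the $\mu$ main term $q^{d-1}$ matches the ``expected'' single-component count). Here one must be slightly careful: when $a(H) = 0$ the value $Z$ could still be as large as $O(q^{d-1})$ coming from lower-dimensional or non-geometrically-irreducible components, but those contribute $O(q^{d-3/2})$ by the argument in the proof of Theorem~\ref{T:Lang-Weil}\eqref{I:a components}, so the deviation estimate survives.

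Then the Chebyshev-type counting is routine: if $N$ denotes the number of very bad hyperplanes, each contributes $\gg q^{2(d-1)}$ to $\sum_{H}(Z(H) - \mu)^2 = (\#\PPdual^n(\F_q))\,\sigma^2 = O(q^n)\cdot O(q^{2d-e-1}) = O(q^{2d-e-1+n})$. Hence
\[
  N \cdot q^{2d-2} = O(q^{2d-e-1+n}), \qquad\text{so}\qquad N = O(q^{n-e+1}) = O(q^{\codim\phi(X)+1}),
\]
since $\codim \phi(X) = n - e$. This completes the argument. The main obstacle is the deviation lower bound for a very bad $H$ in the case $a(H)=0$: one has to rule out that stray lower-dimensional pieces of $\phi^{-1}H$, or geometrically reducible $(d-1)$-dimensional components, inflate $Z$ back up to near $q^{d-1}$. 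This is handled by the uniform (complexity-dependent) Lang--Weil estimates, but it requires knowing that $\phi^{-1}H$ ranges over a bounded-complexity family as $H$ varies, which is where the reduction already done (spreading out over a finitely generated ring, constructibility of the relevant loci) does its work; I would invoke the footnote's notion of complexity to make the implied constants uniform in both $q$ and $H$.
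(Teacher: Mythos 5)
Your proposal is correct and follows essentially the same route as the paper's proof: compute $\mu$ and $\sigma^2$ via Lemma~\ref{var_bound} together with Lang--Weil, observe via Theorem~\ref{T:Lang-Weil}\eqref{I:a components} that a very bad $H$ forces $\#(\phi^{-1}H)(\F_q)$ to deviate from $\mu$ by $\gg q^{\dim X - 1}$, and then apply Chebyshev (or equivalently, as you do, a second-moment count over all hyperplanes). One small remark: the worry you flag about ``stray lower-dimensional pieces'' of $\phi^{-1}H$ is vacuous, since $\phi^{-1}H$ is cut out in the irreducible variety $X$ by a single equation and $\phi^{-1}H \ne X$ (else $H \supseteq \phi(X)$ and $\phi^{-1}H = X$ is geometrically irreducible, so $H$ is not very bad), hence every irreducible component has dimension exactly $\dim X - 1$; the paper makes this point explicitly before invoking Theorem~\ref{T:Lang-Weil}\eqref{I:a components}.
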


\begin{proof}
Let $Y=\overline{\phi(X)}$.
Let $r=\dim X$ and $m=\dim Y$, so the nonempty fibers of $\phi$ have dimension $r-m$.
Consider the random variable $\#(\phi^{-1}H)(\F_q)$ for $H$ chosen uniformly at random in $\PPdual^n(\F_q)$. 
Let $\mu$ and $\sigma^2$ denote its mean and variance. 
By Lemma~\ref{var_bound} and 
Theorem~\ref{T:Lang-Weil}(\ref{I:crude bound},\ref{I:geometrically irreducible}) 
applied to $X$, $Y$, and the fibers of $\phi$,
\begin{align}
\label{E:mean using Lang-Weil}
    \mu &= (q^r + O(q^{r-1/2})) (q^{-1} + O(q^{-2})) = q^{r-1} + O(q^{r-3/2})  \\
\label{E:variance using Lang-Weil}
    \sigma^2 &= O(q^m q^{2(r-m)} q^{-1}) = O(q^{2r-m-1}). 
\end{align}

If $H$ is very bad, then $\phi^{-1} H \ne X$, 
so each irreducible component of  $\phi^{-1}H$ has dimension $r-1$,
and Theorem~\ref{T:Lang-Weil}\eqref{I:a components} implies that
$\#(\phi^{-1}H)(\F_q)$ is either $O(q^{r-3/2})$ or at least $2q^{r-1}-O(q^{r-3/2})$, 
so by \eqref{E:mean using Lang-Weil},
\[
    |\#(\phi^{-1}H)(\F_q)-\mu| 
        \geq q^{r-1}-O(q^{r-3/2}) 
        \geq \frac{1}{2} q^{r-1} \quad\textup{for large $q$}.
\]
Define $t$ so that $\dfrac{1}{2} q^{r-1} = t\sigma$.  Then
\begin{align*}
\text{Prob} \left( H\ \text{is very bad}\right)
&\leq \text{Prob} \left( |\#(\phi^{-1}H)(\F_q)-\mu|\geq t\sigma \right)
\\
&\leq \frac{1}{t^2} \quad\text{(by Chebyshev's inequality)}\\
&= \frac{4\sigma^2}{q^{2r-2}} \\
&= O(q^{1-m}) \quad \textup{(by \eqref{E:variance using Lang-Weil})}.
\end{align*} 
Multiplying by the total number of hyperplanes over $\F_q$, which is $O(q^n)$, gives $O(q^{n-m+1})$.
\end{proof}

\begin{lemma}
\label{L:deviations}
Let $\psi \colon V \to B$ be a morphism of varieties over a finite field $k$.
Suppose that $B$ is irreducible, and the generic fiber of $\psi$ is not geometrically irreducible. Call a point $b\in B(\F_q)$ \defi{very bad} if the number of 
\hbox{$\F_q$-irreducible} components of $\psi^{-1}b$ that are geometrically irreducible is not $1$.
Then there exists $c>0$ such that there exist arbitrarily large finite fields $\F_q \supset k$
such that $B(\F_q)$ contains at least
$c q^{\dim B}$ very bad points.
\end{lemma}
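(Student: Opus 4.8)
The plan is to base change along a finite \'etale cover of a dense open of $B$ that splits the generic fiber into geometrically irreducible pieces, and then to observe that every $\F_q$-point of $B$ lying under an $\F_q$-point of that cover is automatically very bad; a Lang--Weil count of the points of the cover then produces $\asymp q^{\dim B}$ of them.

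First I would record two harmless reductions. A dense open $B^\circ\subseteq B$ satisfies $\dim B^\circ=\dim B$, and a point of $B^\circ(\F_q)$ that is very bad is also very bad as a point of $B(\F_q)$ (both $\psi^{-1}b$ and the property of being very bad depend only on $b$); so it is enough to produce a single $c>0$ and arbitrarily large $\F_q\supseteq k$ with at least $c\,q^{\dim B}$ very bad points in $B^\circ(\F_q)$. Next, for any irreducible variety $W$ over $k$, the algebraic closure $k_W$ of $k$ in $k(W)$ is finite over $k$, and for $\F_q\supseteq k_W$ the variety $W_{\F_q}$ is a disjoint union of geometrically irreducible varieties each of dimension $\dim W$; hence by Theorem~\ref{T:Lang-Weil} there is $q_0$ with $\#W(\F_q)\ge\tfrac12 q^{\dim W}$ for all $\F_q\supseteq k_W$ with $q\ge q_0$. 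Applying this with $W=B\smallsetminus\overline{\psi(V)}$ disposes of the case that $\psi$ is not dominant, since then $\psi^{-1}b=\varnothing$ for every $b\in W$, so every point of $W(\F_q)$ is very bad. Thus assume from now on that $\psi$ is dominant; then the generic fiber $V_\eta$ over $K\colonequals k(B)$ is nonempty, and, not being geometrically irreducible, it has $g\ge 2$ irreducible components after base change to $\Kbar$.

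Next I would build the cover. Choose a finite separable extension $L/K$ over which all $g$ of these geometric components are defined; then $V\times_B\Spec L$ has exactly $g$ irreducible components, each geometrically irreducible. Spreading $L/K$ out yields a dense open $B^\circ\subseteq B$ and a finite \'etale morphism $\rho\colon\widetilde B\to B^\circ$ with $\widetilde B$ irreducible and $k(\widetilde B)=L$; the generic fiber of $V\times_B\widetilde B\to\widetilde B$ is $V\times_B\Spec L$. After shrinking $B^\circ$ and $\widetilde B$ correspondingly and spreading out the $g$ components of $V\times_B\Spec L$ by the constructibility results of \cite{EGA-IV.III}*{\S9.7} (as in Section~\ref{S:reduction to finite fields}), I may assume that for every $\widetilde b\in\widetilde B$ the fiber $(V\times_B\widetilde B)_{\widetilde b}$ has exactly $g$ irreducible components and each is geometrically irreducible over $\kappa(\widetilde b)$. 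Now take $\F_q\supseteq k$ and $b\in B^\circ(\F_q)$ admitting an $\F_q$-rational preimage $\widetilde b\in\rho^{-1}(b)$; then $\kappa(\widetilde b)=\F_q$, so $\psi^{-1}b\isom(V\times_B\widetilde B)_{\widetilde b}$ has exactly $g\ge 2$ irreducible components as an $\F_q$-scheme, each geometrically irreducible, and hence $b$ is very bad. Every point of $\widetilde B(\F_q)$ lies over such a $b$, and each such $b$ has at most $\deg\rho$ preimages, so the number of them is at least $\#\widetilde B(\F_q)/\deg\rho\ge q^{\dim B}/(2\deg\rho)$ once $\F_q$ contains the field of constants of $\widetilde B$ and $q$ is large, by the Lang--Weil bound above applied to $\widetilde B$. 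Taking $c\colonequals 1/(2\deg\rho)$ and letting $\F_q$ run through the arbitrarily large such fields finishes the argument.

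The part that will take real work is the spreading-out bookkeeping in the third paragraph: arranging that over a single dense open of $\widetilde B$ \emph{every} fiber of $V\times_B\widetilde B\to\widetilde B$ has precisely $g$ irreducible components, each geometrically irreducible. This is carried out exactly like the analogous reductions in Section~\ref{S:reduction to finite fields}, using \cite{EGA-IV.III}*{\S9.7}; everything else is the elementary observation that a fiber splitting completely in the cover has at least two geometrically irreducible $\F_q$-components, together with a Lang--Weil count of the $\F_q$-points of $\widetilde B$.
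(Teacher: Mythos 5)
Your proof is correct and follows essentially the same strategy as the paper: pass to a finite cover of (a dense open of) $B$ whose function field splits the generic fiber into geometrically irreducible components, spread out over a dense open of the cover, and then apply the Lang--Weil bound to count. The only noteworthy differences are cosmetic — the paper phrases the reduction abstractly via an arbitrary quasi-finite dominant morphism $B'\to B$ and then only needs to track two of the components $W_1,W_2$ (enough to make a fiber ``very bad''), whereas you construct the finite \'etale cover $\widetilde B$ explicitly and spread out all $g$ components — and your parenthetical that $W_{\F_q}$ is a \emph{disjoint} union of geometrically irreducible pieces should just say ``union,'' though this does not affect the Lang--Weil estimate you draw from it.
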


\begin{proof}
If $B' \to B$ is a quasi-finite dominant morphism of irreducible varieties,
and the result holds for the base change $V' \to B'$ of $V \to B$,
then the result holds for $V \to B$, 
because the image under $B' \to B$ of a set of $c' q^{\dim B}$ points of $B'(\F_q)$ 
has size at least $c q^{\dim B}$ for a possibly smaller $c$.

Let $\eta$ be the generic point of $B$.
All geometric components of $\psi^{-1} \eta$ are defined over a finite extension $K'$ of $k(B)$.
By choosing $B' \to B$ as above with $k(B') = K'$,
we may reduce to the case that all irreducible components of $\psi^{-1} \eta$
are \emph{geometrically} irreducible.
By passing to a finite extension of $k$
and replacing $B$ by an irreducible component of the base extension,
we may assume also that $B$ is geometrically irreducible.

If $\psi^{-1} \eta$ is empty, 
then there is a dense open subset $U$ of $B$ above which the fibers are empty, 
and $\# U(\F_q) = q^{\dim B} + O(q^{\dim B -1/2})$ 
by Theorem~\ref{T:Lang-Weil}\eqref{I:geometrically irreducible},
so the conclusion holds with $c=1/2$.

Otherwise $\psi^{-1} \eta$ has $\ge 2$ irreducible components.
Let $W_1$ and $W_2$ be their closures in $V$.
The locus of $b \in B$ such that 
the fibers of $W_1 \to B$ and $W_2 \to B$ above $b$
are distinct geometrically irreducible components of $\psi^{-1} b$
is constructible,
so by replacing $B$ by a dense open subvariety 
we may assume that the locus is all of $B$.
Now for any $\F_q\supset k$, all $b \in B(\F_q)$ are very bad,
and their number is $q^{\dim B} + O(q^{\dim B -1/2})$ 
by Theorem~\ref{T:Lang-Weil}\eqref{I:geometrically irreducible}.
\end{proof}

\begin{proof}[Proof of Theorem~\ref{T:fibers of constant dimension}]
By Section~\ref{S:reduction to finite fields}, we may assume that the ground field is finite.
Let $B$ be an irreducible variety contained in $\calM_{\bad}$.
Let $V \to B$ be the morphism whose fiber over a point corresponding to a hyperplane $H$ is $\phi^{-1} H$.
By Lemma~\ref{L:deviations}, 
for arbitrarily large $q$ there are at least $c q^{\dim B}$ very bad hyperplanes $H \in B(\F_q)$.
On the other hand, by Lemma~\ref{L:very bad hyperplanes}  
there are at most $O(q^{\codim Y + 1})$ very bad hyperplanes.
Thus $\dim B \le \codim Y+1$.
Since this holds for all irreducible $B \subset \calM_{\bad}$,
we obtain $\dim \calM_{\bad} \le \codim Y + 1$.
\end{proof}

\section{Proof of the most general version}
\label{S:complicated}

\begin{lemma}
\label{L:containing Y}
For a constructible set $Y \subset \PP^n$, the locus of hyperplanes containing $Y$ 
is a variety of dimension at most $\codim Y - 1$.
\end{lemma}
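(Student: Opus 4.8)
The plan is to reduce immediately to the case where $Y$ is a linear subspace, then compute directly. First, observe that a hyperplane $H = \{ \ell = 0 \}$ (for a nonzero linear form $\ell$, well-defined up to scalar) contains a constructible set $Y$ if and only if $\ell$ vanishes on the Zariski closure $\overline{Y}$, if and only if $\ell$ vanishes on the linear span $L$ of $\overline{Y}$, i.e.\ the smallest linear subspace of $\PP^n$ containing $Y$. So the locus of hyperplanes containing $Y$ equals the locus of hyperplanes containing $L$, and we have $\codim L \ge \codim \overline{Y} \ge \codim Y$ (the last inequality because $\dim Y = \dim \overline{Y}$ by our conventions in Section~2). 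Hence it suffices to treat $Y = L$ a linear subspace and show the locus of hyperplanes containing $L$ is a linear subspace of $\PPdual^n$ of dimension $\codim L - 1 = n - \dim L - 1$.

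For the linear case, I would argue via linear algebra over the ground field $k$. Write $\PP^n = \PP(E)$ for an $(n+1)$-dimensional vector space $E$, so $\PPdual^n = \PP(E^\vee)$, and let $\widetilde{L} \subseteq E$ be the linear subspace of dimension $d+1$ (where $d = \dim L$) corresponding to $L$. A hyperplane $H = \PP(\ker \ell)$ contains $L$ precisely when $\ell \in E^\vee$ restricts to zero on $\widetilde{L}$, i.e.\ $\ell$ lies in the annihilator $\Ann(\widetilde{L}) \subseteq E^\vee$, which is a linear subspace of dimension $(n+1) - (d+1) = n - d$. Therefore the locus in question is $\PP(\Ann(\widetilde{L}))$, a linear subvariety of $\PPdual^n$ of dimension $n - d - 1 = \codim L - 1$. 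In particular it is a variety (irreducible, even), and closed, so its dimension as a constructible set is genuinely $\codim L - 1$.

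Combining: the locus of hyperplanes containing $Y$ coincides with $\PP(\Ann(\widetilde{L}))$ where $L$ is the linear span of $Y$, hence it is a variety of dimension $\codim L - 1 \le \codim Y - 1$, as claimed. I do not anticipate a genuine obstacle here; the only point requiring a little care is the bookkeeping with the conventions of Section~2 (dimension of a constructible set, and the inequality $\codim L \le \codim Y - 1 + 1$ coming from passing $Y \rightsquigarrow \overline{Y} \rightsquigarrow L$, which can only increase codimension), and the observation that containment of a constructible set is equivalent to containment of its closure and then of its span, so that nothing is lost in the reduction. Everything is insensitive to base field, so no reduction to finite fields or algebraically closed fields is needed.
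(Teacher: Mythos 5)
Your proposal is correct and matches the paper's proof exactly: both reduce to the linear span $L$ of $Y$ and observe that the hyperplanes containing $L$ form a projective space of dimension $\codim L - 1 \le \codim Y - 1$. You have simply unpacked the linear-algebra computation (via the annihilator $\Ann(\widetilde{L})$) that the paper leaves implicit.
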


\begin{proof}
Let $L$ be the linear span of $Y$ in $\PP^n$.
The hyperplanes containing $Y$ are those containing $L$, 
which form a projective space of dimension $\codim L - 1 \le \codim Y - 1$.
\end{proof}

\begin{proof}[Proof of Theorem~\ref{T:complicated}]
We may assume that $k$ is algebraically closed.
By Lemma~\ref{L:containing Y}, we may ignore hyperplanes $H$ containing $\phi(X)$.
Now every irreducible component of $\phi^{-1} H$ is of dimension $\dim X - 1$.
Let $X' = X - W$.
By Theorem~\ref{T:fibers of constant dimension} applied to $X' \to \PP^n$,
it suffices to consider $H$ such that $\phi^{-1} H \intersect X'$ is geometrically irreducible.
For such $H$, the following are equivalent:
\begin{itemize}
\item $H \in \calM_{\bad}$;
\item $\phi^{-1} H$ is not irreducible;
\item $\phi^{-1} H$ contains a closed subset of $W$ of dimension $\dim X-1$;
\item $\phi^{-1} H$ contains $W_i$ for some $i$;
\item $H$ contains $\phi(W_i)$ for some $i$.\qedhere
\end{itemize}
\end{proof}


\section{Application to monodromy}
\label{S:monodromy}

Let $K$ be a field.  Fix a separable closure $K_s$ of $K$, and let $G_K = \Gal(K_s/K)$.
If $f \colon X \to \Spec K$ is finite \'etale,
let $\Mon(f)$ be the image of 
$G_K \to \Aut(X(K_s))$.
More generally, if $f \colon X \to Y$ is generically \'etale with $Y$ irreducible, 
let $f_K \colon X_K \to \Spec K$ be the generic fiber,
and define the \defi{monodromy group} $\Mon(X/Y) = \Mon(f) \colonequals \Mon(f_K)$.

Let $f \colon X\to Y$ be a degree~$d$ finite \'etale morphism of schemes.
As in \cite{Vakil2006-schubert}*{Section~3.5}, define the \defi{Galois scheme} of $f$ as 
the following open and closed $Y$-subscheme of the $d$th fibered power $X \times_Y \dots \times_Y X$:
\[
	\GS(f):= \{(x_1,...,x_d)\in X \times_Y \dots \times_Y X 
	            \mid x_i\neq x_j\ \textup{for}\ i\neq j \}.
\]

If $L/K$ is a finite separable extension, with Galois closure $\widetilde{L}$, 
and $f$ is $\Spec L\to \Spec K$, 
then $\Mon(f) \isom \Gal(\widetilde{L}/K)$ 
and any connected component $Z$ of $\GS(f)$ is isomorphic to $\Spec\widetilde{L}$.

\begin{lemma}
\label{L:mon_Z_over_Y_equals_mon_X_over_Y}
Let $f\colon X\to Y$ be a finite \'etale morphism with $Y$ irreducible. 
Let $Z$ be a nonempty open and closed subscheme of $\GS(f)$. Then $\Mon(X/Y)\simeq\Mon(Z/Y)$.
\end{lemma}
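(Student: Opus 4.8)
The plan is to reduce to the generic fiber and then to an elementary fact about the torsor underlying a Galois scheme. Put $K=k(Y)$ and $G_K=\Gal(K_s/K)$, write $f_K\colon X_K\to\Spec K$ for the generic fiber of $f$, and note that forming the Galois scheme commutes with base change, so $\GS(f)_K=\GS(f_K)$ and $Z_K$ is an open and closed subscheme of $\GS(f_K)$. By definition $\Mon(X/Y)$ is the image of $G_K$ in $\Aut(X_K(K_s))$ and $\Mon(Z/Y)$ is the image of $G_K$ in $\Aut(Z_K(K_s))$; thus both are quotients of $G_K$, and it suffices to check that the two kernels coincide.

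Next I would set up the torsor picture. Let $d=\deg f$ and $S=X_K(K_s)$, a $G_K$-set with $\#S=d$. Then $\GS(f_K)(K_s)\subseteq S^{d}$ is exactly the set of $d$-tuples of pairwise distinct elements of $S$, that is, the set of bijections $\{1,\dots,d\}\to S$; the group $\Aut(S)$ acts on it simply transitively by post-composition, and $G_K$ acts through the monodromy homomorphism $G_K\to\Aut(S)$, whose image is $\Mon(X/Y)$. I also need $Z_K\neq\emptyset$: since $\GS(f)\to Y$ is finite \'etale it is both open and closed, so the image of the open and closed subscheme $Z$ is open and closed and nonempty in the connected scheme $Y$, hence equals $Y$; thus $Z\to Y$ is surjective and $Z_K(K_s)$ is a nonempty $G_K$-stable subset $T\subseteq\GS(f_K)(K_s)$.

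The remaining point is purely group-theoretic: for any nonempty $G_K$-stable subset $T$ of the $\Aut(S)$-torsor $\GS(f_K)(K_s)$, the kernel of $G_K\to\Aut(T)$ equals $N\colonequals\ker\bigl(G_K\to\Aut(S)\bigr)$. Indeed, an element of $N$ acts trivially on $S$, hence trivially on every bijection by post-composition, so $N$ lies in the kernel of $G_K\to\Aut(T)$; conversely, if $\sigma\in G_K$ fixes some $\beta\in T$, then the induced permutation $\sigma|_S$ satisfies $\sigma|_S\circ\beta=\beta$, forcing $\sigma|_S=\mathrm{id}$, so $\sigma\in N$. Applying this with $T=Z_K(K_s)$ gives $\ker\bigl(G_K\to\Aut(Z_K(K_s))\bigr)=N$, whence $\Mon(Z/Y)=G_K/N$; since also $\Mon(X/Y)=G_K/N$ by the definition of $N$, and both identifications are induced by the identity of $G_K$, the isomorphism $\Mon(Z/Y)\isom\Mon(X/Y)$ is canonical.

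I do not expect a serious obstacle: the argument is bookkeeping once the torsor structure of $\GS(f_K)(K_s)$ is in place. The only points deserving care are the non-emptiness of $Z_K$ (handled above) and the fact that $\Mon(Z/Y)$ is defined for the possibly disconnected $Z$, via the general definition of $\Mon$ as the image of $G_K$ acting on the finite set of geometric points of the generic fiber. Alternatively, one can argue componentwise, using that (as recalled just before the lemma) every connected component of $\GS(f_K)$ is isomorphic to $\Spec\widetilde L$ with $\Gal(\widetilde L/K)\isom\Mon(X/Y)$, and that $Z_K$ is a disjoint union of copies of $\Spec\widetilde L$ on which $G_K$ acts diagonally and faithfully through $\Gal(\widetilde L/K)$.
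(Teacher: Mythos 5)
Your proof is correct and takes essentially the same approach as the paper: pass to the generic fiber $K=k(Y)$, view $\GS(f_K)(K_s)$ as the set of bijections $\{1,\dots,d\}\to X_K(K_s)$ with $G_K$ acting by post-composition, and observe that $G_K\to\Aut(X_K(K_s))$ and $G_K\to\Aut(Z_K(K_s))$ have the same kernel because a $\sigma$ fixing a single bijection in the nonempty set $Z_K(K_s)$ must act trivially on $X_K(K_s)$. You spell out the nonemptiness of $Z_K$ and the torsor bookkeeping more explicitly than the paper does, but the argument is the same.
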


\begin{proof}
Let $K=k(Y)$.
Then $Z(K_s)$ is a union of one or more $G_K$-orbits 
in the set of bijections $\{1,\ldots,d\}\to X(K_s)$.
Thus $G_K \to \Aut(X(K_s))$ and $G_K \to \Aut(Z(K_s))$ have the same kernel,
and hence canonically isomorphic images.
\end{proof}

\begin{lemma}
\label{L:Lemma_Irr_open}
Let $f \colon X \to Y$ be an open morphism of noetherian schemes.
Suppose that $Y$ is irreducible, with generic point $\eta$.
Then there is a bijection $\Irr X \to \Irr X_\eta$ sending $Z$ to $Z_\eta$.
\end{lemma}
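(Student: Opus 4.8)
The plan is to reduce everything to a statement about generic points, exploiting the noetherian hypothesis: $X$ has finitely many irreducible components $Z_1,\dots,Z_m$ (if $X=\emptyset$ then $X_\eta=\emptyset$ and both sides are empty, so there is nothing to prove). Write $\zeta_i$ for the generic point of $Z_i$. The key claim is that $f(\zeta_i)=\eta$ for every $i$, and this is where openness of $f$ enters. Indeed, $U_i\colonequals X\setminus\bigcup_{j\ne i}Z_j$ is a nonempty open subset of $X$ contained in $Z_i$, so $f(U_i)$ is a nonempty open subset of the irreducible scheme $Y$, hence contains $\eta$. Choosing $u\in U_i$ with $f(u)=\eta$, we have $u\in\overline{\{\zeta_i\}}$, so $\eta=f(u)\in\overline{\{f(\zeta_i)\}}$; thus $f(\zeta_i)$ is a generization of $\eta$ in $Y$, and since $\eta$ is the generic point of $Y$, this forces $f(\zeta_i)=\eta$.

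Granting the claim, I would record the following facts, using that the topological space underlying the fiber $(Z_i)_\eta$ of $f|_{Z_i}\colon Z_i\to Y$ over $\eta$ is the set-theoretic fiber $(f|_{Z_i})^{-1}(\eta)$ with the subspace topology. Since $\overline{\{\zeta_i\}}=Z_i$ in $X$ (irreducible components are closed) and $(Z_i)_\eta\subseteq Z_i$ contains $\zeta_i$, the closure of $\{\zeta_i\}$ inside $(Z_i)_\eta$ is all of $(Z_i)_\eta$; hence $(Z_i)_\eta$ is irreducible and nonempty with generic point $\zeta_i$. Moreover the closure of $(Z_i)_\eta$ in $X$ lies between $\overline{\{\zeta_i\}}=Z_i$ and $Z_i$, so it equals $Z_i$. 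In particular $(Z_i)_\eta$ is closed in $X_\eta$ (being $X_\eta\cap Z_i$).

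Now assemble. From $X=\bigcup_i Z_i$ we get $X_\eta=\bigcup_i(Z_i)_\eta$ as topological spaces, a finite union of nonempty irreducible closed subsets, so $\Irr X_\eta$ consists exactly of the maximal members of $\{(Z_i)_\eta\}_i$. If $(Z_i)_\eta\subseteq(Z_j)_\eta$, taking closures in $X$ gives $Z_i\subseteq Z_j$, whence $i=j$ since $Z_i,Z_j$ are irreducible components; thus the $(Z_i)_\eta$ are pairwise distinct and pairwise incomparable. Therefore each $(Z_i)_\eta$ is an irreducible component of $X_\eta$, and $Z_i\mapsto(Z_i)_\eta$ is a well-defined injection $\Irr X\to\Irr X_\eta$. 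For surjectivity, any $C\in\Irr X_\eta$ is irreducible and contained in $\bigcup_i(Z_i)_\eta$, hence contained in some $(Z_i)_\eta$, and then $C=(Z_i)_\eta$ by maximality of $C$.

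The only substantive input beyond point-set topology is the openness of $f$, used exactly once to guarantee $\eta\in f(Z_i)$ (equivalently $f(\zeta_i)=\eta$); I expect that step, together with the standard identification of the underlying space of the scheme-theoretic fiber $(Z_i)_\eta$ with the set-theoretic fiber carrying its subspace topology, to be the only points requiring care.
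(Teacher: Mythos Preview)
Your proof is correct and follows essentially the same approach as the paper: both use openness of $f$ applied to the open set $X\setminus\bigcup_{j\ne i}Z_j$ to conclude that every irreducible component of $X$ meets $X_\eta$, and then invoke the general topological fact that in this situation $Z\mapsto Z\cap X_\eta$ gives a bijection $\Irr X\to\Irr X_\eta$. The only difference is that the paper cites \cite{EGA-I-Springer}*{0, 2.1.13} for this last fact, whereas you spell out the argument by hand.
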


\begin{proof}
If $Z \in \Irr X$, then the set $Z' \colonequals X - \Union_{W \in \Irr X, \; W \ne Z} W \subset Z$
is nonempty and open in $X$, 
so $f(Z')$ is nonempty and open in $Y$, 
so $\eta \in f(Z') \subset f(Z)$, 
so $Z$ meets $X_\eta$.
By \cite{EGA-I-Springer}*{0, 2.1.13}, 
$\{ Z \in \Irr X : \textup{$Z$ meets $X_\eta$} \}$ is in bijection with $\Irr X_\eta$.
\end{proof}

\begin{lemma}
\label{L:standard_summary}
Let $Z \to Y$ be a right $G$-torsor for a finite group $G$, with $Y$ irreducible and $Z$ connected.
\begin{enumerate}[\upshape (a)]
\item \label{I:decomposition group}
Let $y \in Y$.
Let $T$ be a connected component of $Z_y$.
Let $G_T \subset G$ be the decomposition group of $T$.
Then $\Mon(T/y) \isom G_T \subset G$.

\item \label{I:Z_y connected}
The injection $\Mon(T/y) \injects G$ in \eqref{I:decomposition group} is an isomorphism 
if and only if $Z_y$ is connected.

\item \label{I:decomposition group is whole group}
If $Z$ is irreducible, then $\Mon(Z/Y)\isomto G$. 

\end{enumerate}
\end{lemma}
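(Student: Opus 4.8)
The plan is to treat the three parts of Lemma~\ref{L:standard_summary} in order, each reducing to standard facts about torsors and the dictionary between connected components and $G_K$-orbits.

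For part~\eqref{I:decomposition group}: let $K = k(Y)$, and identify the generic fiber $Z_\eta \to \Spec K$ with a (right) $G$-torsor, so that $Z_\eta(K_s)$ is a single free transitive right $G$-set on which $G_K$ acts on the left, commuting with the $G$-action; concretely, after choosing a base point, the $G_K$-action is given by a continuous homomorphism $\rho \colon G_K \to G$ (left multiplication), and $\Mon(Z/Y)$ is the image of $\rho$. For a point $y \in Y$, write $\calO_{Y,y}^h$ (or just argue with the generic points of the fibers) to see that the connected components of $Z_y$ correspond to the orbits of the decomposition group $\Gal((K_s)/K_y^{\mathrm{unr}})$ — more precisely, the fiber $T$ corresponds to an orbit, and its decomposition group $G_T \subset G$ is by definition the stabilizer setup making $\Mon(T/y)$ the image of the local Galois group in $G$; since $T$ is itself a torsor under $G_T$ (the decomposition group acts simply transitively on the component it fixes), Lemma~\ref{L:mon_Z_over_Y_equals_mon_X_over_Y}-type reasoning gives $\Mon(T/y) \isom G_T$. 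I would phrase this via: $T$ is a connected $G_T$-torsor over the residue field $\kappa(y)$, so applying part~\eqref{I:decomposition group is whole group} (which I prove independently below, or first) to $T/y$ yields $\Mon(T/y) \isomto G_T$.

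For part~\eqref{I:Z_y connected}: the components of $Z_y$ are permuted transitively by $G$ (since $G$ acts transitively on the fiber and preserves the component structure), and the number of them is $[G : G_T]$. Hence $Z_y$ is connected $\iff$ there is one component $\iff G_T = G \iff$ the injection $\Mon(T/y) \injects G$ from~\eqref{I:decomposition group} is onto.

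For part~\eqref{I:decomposition group is whole group}: apply Lemma~\ref{L:Lemma_Irr_open} to the finite (hence open, being flat and finite-type over an irreducible base, or just quasi-finite and open as a torsor is flat) morphism $Z \to Y$: since $Z$ is irreducible, so is $Z_\eta$, i.e.\ $Z_\eta = \Spec L$ for a single finite separable field extension $L/K$; being a $G$-torsor that is a field, $L/K$ is Galois with $\Gal(L/K) \isom G$, and then $\Mon(Z/Y) = \Mon(Z_\eta/\Spec K) = \Gal(L/K) \isom G$. The main subtlety — and the one I would be most careful about — is the bookkeeping in part~\eqref{I:decomposition group}: making precise that a connected component $T$ of $Z_y$ is naturally a torsor under its decomposition subgroup $G_T$, so that~\eqref{I:decomposition group is whole group} can be bootstrapped to it, rather than re-deriving the local Galois theory from scratch. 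Everything else is the standard equivalence between finite étale covers / torsors and finite continuous $G_K$-sets, together with Lemmas~\ref{L:mon_Z_over_Y_equals_mon_X_over_Y} and~\ref{L:Lemma_Irr_open} already in hand.
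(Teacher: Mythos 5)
Your proposal is correct, but it reorganizes the logic in a genuinely different way from the paper. The paper proves~(a) directly by citing the decomposition/inertia theory of \cite{SGA1}*{V.1.3 and V.2.4} (noting that in the Galois \'etale case the residue field extension is Galois), then deduces~(b) from~(a) using transitivity of the $G$-action on $Z_y$, and finally deduces~(c) by applying~(b) at the generic point $\eta$ together with Lemma~\ref{L:Lemma_Irr_open}. You run the deductions in the opposite direction: first establish~(c) independently (via Lemma~\ref{L:Lemma_Irr_open} to get $Z_\eta$ irreducible, so $Z_\eta = \Spec L$ with $L/K$ a field that is a $G$-torsor, hence Galois with group $G$), then deduce~(a) by observing that a connected component $T$ of $Z_y$ is itself a connected $G_T$-torsor over the field $\kappa(y)$ (hence irreducible) and bootstrapping~(c) applied to $T \to \Spec \kappa(y)$, and then~(b) follows from~(a) exactly as in the paper. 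Both routes are sound. The paper's is shorter because it outsources~(a) to SGA1; yours buys a more self-contained argument by reducing~(a) to the transparent special case of a connected torsor over a field, at the cost of having to verify the (easy) fact that $T$ inherits a torsor structure under the decomposition subgroup $G_T$. Your first sketch of~(a) via $\calO^h_{Y,y}$ and local Galois groups is the SGA1 argument in disguise and is a bit hand-wavy; the bootstrap from~(c), which you correctly flag as the cleaner option, is the one to use.
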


\begin{proof}
Part~\eqref{I:decomposition group} is just 
the usual theory of the decomposition and inertia groups specialized to the Galois \'etale case: 
use \cite{SGA1}*{V.1.3 and V.2.4}; the residue field extension is Galois.
Part~\eqref{I:Z_y connected} follows since $G$ acts transitively on $Z_y$. 
Part~\eqref{I:decomposition group is whole group} follows by applying~\eqref{I:Z_y connected} 
to the generic point $\eta$ of $Y$ and noting that $Z_\eta$ is irreducible by Lemma~\ref{L:Lemma_Irr_open}. 
\end{proof}

\begin{corollary}
\label{C:the_finite_etale_case_mon_of_fibers}
Let $f\colon X\to Y$ be a finite \'etale morphism, where $Y$ is irreducible and normal. 
Let $Z$ be a connected component of $\GS(f)$.
Let $y \in Y$.
Then there is an injection $\Mon(X_y/y) \injects \Mon(X/Y)$, 
well-defined up to an inner automorphism of $\Mon(X/Y)$,
and it is an isomorphism if and only if $Z_y$ is connected. 
\end{corollary}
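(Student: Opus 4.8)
The plan is to assemble the statement from the two preceding lemmas by passing through the Galois scheme. First I would observe that since $Y$ is irreducible and normal with generic point $\eta$, the connected component $Z$ of $\GS(f)$ is itself irreducible (it is an open and closed subscheme of a normal scheme, as $X \times_Y \dots \times_Y X$ is finite \'etale over the normal $Y$ and hence normal), and moreover $Z \to Y$ is finite \'etale. I would then invoke the standard fact (as in \cite{Vakil2006-schubert}*{Section~3.5}) that $Z \to Y$ is a right $G$-torsor, where $G = \Mon(X/Y)$; concretely, $G$ is the image of $G_K \to \Aut(X(K_s))$ for $K = k(Y)$, which acts simply transitively on the geometric points of $Z$ over $\eta$ by the description of $Z(K_s)$ as a $G_K$-orbit of bijections $\{1,\dots,d\}\to X(K_s)$ used in the proof of Lemma~\ref{L:mon_Z_over_Y_equals_mon_X_over_Y}. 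That $\Mon(Z/Y) \isom G$ is exactly Lemma~\ref{L:standard_summary}\eqref{I:decomposition group is whole group}, and $\Mon(X/Y)\isom\Mon(Z/Y)$ by Lemma~\ref{L:mon_Z_over_Y_equals_mon_X_over_Y}; so the identification of $G$ with $\Mon(X/Y)$ is consistent.

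Next I would feed this torsor into Lemma~\ref{L:standard_summary}. Fix $y \in Y$ and a connected component $T$ of $Z_y$. Part~\eqref{I:decomposition group} gives an injection $\Mon(T/y) \injects G \isom \Mon(X/Y)$, namely the inclusion of the decomposition group $G_T$; and part~\eqref{I:Z_y connected} says this injection is an isomorphism exactly when $Z_y$ is connected. To finish I must (a) replace $\Mon(T/y)$ by $\Mon(X_y/y)$ and (b) check the well-definedness "up to inner automorphism." For (a): $Z_y$ is a connected component of $\GS(f)_y = \GS(f_y)$, where $f_y \colon X_y \to \Spec\kappa(y)$ is finite \'etale (a base change of $\GS$ along $y \to Y$ commutes with forming $\GS$, since the defining condition $x_i \neq x_j$ is preserved and reflected for a finite \'etale map); hence $T$, being a connected component of $Z_y$, is in particular a connected component of $\GS(f_y)$, and Lemma~\ref{L:mon_Z_over_Y_equals_mon_X_over_Y} applied to $f_y$ over the base $\Spec\kappa(y)$ yields $\Mon(T/y) \isom \Mon(X_y/y)$.

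For (b): the ambiguity is precisely the choice of connected component $T$ of $Z_y$. Since $G$ acts transitively on the set of connected components of $Z_y$ (it acts transitively on $Z_y$, as in the proof of Lemma~\ref{L:standard_summary}\eqref{I:Z_y connected}), two choices $T, T'$ have conjugate decomposition groups $G_{T'} = g G_T g^{-1}$, and the two resulting injections $\Mon(X_y/y)\injects\Mon(X/Y)$ differ by conjugation by $g \in G = \Mon(X/Y)$; thus the injection is canonical up to an inner automorphism of $\Mon(X/Y)$, and in particular the condition "it is an isomorphism" is independent of all choices, matching "$Z_y$ connected." I expect the main obstacle to be purely bookkeeping: making the compatibility between the $G$-action coming from $\Mon(X/Y)$ and the decomposition-group description in Lemma~\ref{L:standard_summary}\eqref{I:decomposition group} precise enough that the inner-automorphism ambiguity is correctly pinned to the choice of $T$ and nothing else, and verifying that $\GS$ commutes with the base change $y \to Y$ at a (possibly non-closed, non-perfect-residue-field) point $y$.
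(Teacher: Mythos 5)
Your proof is correct and follows essentially the same route as the paper's: use normality of $Y$ to see $Z$ is irreducible, note $Z\to Y$ is a $G$-torsor with $G\isom\Mon(X/Y)$, apply Lemma~\ref{L:standard_summary} at $y$, and transfer via Lemma~\ref{L:mon_Z_over_Y_equals_mon_X_over_Y}. Your explicit checks---that $\GS$ commutes with the base change $y\to Y$ and that the inner-automorphism ambiguity is exactly the choice of component $T$ of $Z_y$---are correct and simply make precise what the paper's proof leaves implicit.
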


\begin{proof}
Since $Y$ is normal, $Z$ is irreducible.
The proof of \cite{Fu2015}*{Proposition~3.2.10} implies that $Z\to Y$ is a $G$-torsor.  
Choose a connected component $T$ of $Z_y$.
By Lemma~\ref{L:standard_summary}, we have an injection $\Mon(T/y) \injects G \isom \Mon(Z/Y)$
which is an isomorphism if and only if $Z_y$ is connected.
By Lemma~\ref{L:mon_Z_over_Y_equals_mon_X_over_Y} applied to $X_y \to \{y\}$ and $X \to Y$,
this injection identifies with $\Mon(X_y/y) \injects \Mon(X/Y)$.
\end{proof}

\begin{proof}[Proof of Theorem~\ref{T:monodromy}]
Let $V\subset Y$ be the largest normal open subscheme
above which $\phi$ is finite \'etale. 
If $H$ satisfies (i), then (ii) holds if and only if $H\cap V$ is irreducible.
By Theorem~\ref{T:subvariety} applied to 
$Y\subset\PP^n$ and to $V \subset \PP^n$,
we may discard all $H$ that fail (i) or (ii).
Replace $X \to Y$ by $\phi^{-1} V \to V$.

Let $Z$ be a connected component of $\GS(\phi)$, and let $f$ be the morphism $Z \to Y$.
For $H$ such that $H \intersect Y$ is irreducible, let $h$ denote the generic point of 
$H\cap Y$; then the following are equivalent:

\vspace*{1cm}

\hspace*{-1.5cm}
\begin{minipage}[t]{0.5\linewidth}
\begin{itemize}
\item $H \in \calM_{\good}$; 
\item $\Mon(\phi_H) \isomto \Mon(\phi)$;
\item $\Mon(\phi_h)\isomto \Mon(\phi)$;
\item $Z_h$ is irreducible (by Corollary~\ref{C:the_finite_etale_case_mon_of_fibers}); 
\item $f^{-1}H$ is irreducible (by Lemma~\ref{L:Lemma_Irr_open}).
\end{itemize}
\end{minipage}%
\hspace*{-1.3cm}
\begin{minipage}[t]{0.5\linewidth}
\vspace*{-1cm}
\[
\xymatrix{
Z_h\ar[rr]\ar[rdd] &{}& f^{-1}H\ar[rr]\ar[rdd]&{}&Z\ar[rdd]^<<<<<<f&{}\\
{}&X_h\ar[rr]\ar[d]^{\phi_h}&{}&\phi^{-1}H\ar[rr]\ar[d]^{\phi_H}&{}&X\ar[d]^\phi\\
{}&\{h\} \ar[rr]&{}&H\cap Y\ar[rr]&{}&Y
}
\]
\end{minipage}

\bigskip

\noindent 
Since $Y$ is normal, $Z$ is irreducible, 
so by Theorem~\ref{T:fibers of constant dimension} applied to $f$, 
the last condition above fails on a constructible locus of dimension at most $\codim Y + 1$.
\end{proof}

\section*{Acknowledgments} 

We thank the referee for several helpful comments.

\begin{bibdiv}
\begin{biblist}


\bib{Benoist2011}{article}{
   author={Benoist, Olivier},
   title={Le th\'{e}or\`eme de Bertini en famille},
   language={French, with English and French summaries},
   journal={Bull. Soc. Math. France},
   volume={139},
   date={2011},
   number={4},
   pages={555--569},
   issn={0037-9484},
   review={\MR{2869305}},
   doi={10.24033/bsmf.2619},
}

\bib{EGA-I-Springer}{book}{
   author={Grothendieck, A.},
   author={Dieudonn{\'e}, J. A.},
   title={El\'ements de g\'eom\'etrie alg\'ebrique. I},
   language={French},
   series={Grundlehren der Mathematischen Wissenschaften [Fundamental Principles of Mathematical Sciences]},
   volume={166},
   publisher={Springer-Verlag, Berlin},
   date={1971},
   pages={ix+466},
   isbn={3-540-05113-9},
   isbn={0-387-05113-9},
   review={\MR{3075000}},
   label={EGA~I}, 
}

\bib{EGA-IV.III}{article}{
   author={Grothendieck, A.},
   title={\'El\'ements de g\'eom\'etrie alg\'ebrique. IV. \'Etude locale des
   sch\'emas et des morphismes de sch\'emas. III},
   journal={Inst. Hautes \'Etudes Sci. Publ. Math.},
   number={28},
   date={1966},
   issn={0073-8301},
   review={\MR{0217086 (36 \#178)}},
   label={EGA~$\hbox{IV}_3$}, 
   note={Written in collaboration with J.~Dieudonn\'e}, 
}

\bib{Fu2015}{book}{
   author={Fu, Lei},
   title={Etale cohomology theory},
   series={Nankai Tracts in Mathematics},
   volume={14},
   edition={Revised edition},
   publisher={World Scientific Publishing Co. Pte. Ltd., Hackensack, NJ},
   date={2015},
   pages={x+611},
   isbn={978-981-4675-08-6},
   review={\MR{3380806}},
   doi={10.1142/9569},
}

\bib{Jouanolou1983}{book}{
   author={Jouanolou, Jean-Pierre},
   title={Th\'eor\`emes de Bertini et applications},
   language={French},
   series={Progress in Mathematics},
   volume={42},
   publisher={Birkh\"auser Boston Inc.},
   place={Boston, MA},
   date={1983},
   pages={ii+127},
   isbn={0-8176-3164-X},
   review={\MR{725671 (86b:13007)}},
}

\bib{Lang-Weil1954}{article}{
   author={Lang, Serge},
   author={Weil, Andr{\'e}},
   title={Number of points of varieties in finite fields},
   journal={Amer. J. Math.},
   volume={76},
   date={1954},
   pages={819--827},
   issn={0002-9327},
   review={\MR{0065218 (16,398d)}},
}

\bib{SGA1}{book}{
    author={Grothendieck, Alexander}, 
     title={Rev\^etements \'etales et groupe fondamental (SGA~1)},
  language={French},
    series={Documents Math\'ematiques (Paris) [Mathematical Documents
            (Paris)], 3},
      note={S\'eminaire de G\'eom\'etrie Alg\'ebrique du Bois-Marie
            1960--61. [Geometric Algebra Seminar of Bois-Marie 1960-61];
            Directed by A.~Grothendieck.
            With two papers by M.~Raynaud.
            Updated and annotated reprint of the 1971 original [Lecture
            Notes in Math., vol.~224, Springer, Berlin]},
 publisher={Soci\'et\'e Math\'ematique de France},
     place={Paris},
      date={2003},
     pages={xviii+327},
      isbn={2-85629-141-4},
    review={\MR{2017446 (2004g:14017)}},
     label={SGA~1}, 
}

\bib{Slavov2017}{article}{
   author={Slavov, Kaloyan},
   title={An application of random plane slicing to counting $\mathbb{F}_q$-points on hypersurfaces},
   journal={Finite Fields Appl.},
   volume={48},
   date={2017},
   pages={60--67},
   issn={1071-5797},
   review={\MR{3705734}},
   doi={10.1016/j.ffa.2017.07.002},
}

\bib{Tao2012-langweil}{misc}{
    author={Tao, Terence},
     title={The Lang--Weil bound},
      date={2012-08-31},
   note={Blog, \texttt{https://terrytao.wordpress.com/2012/08/31/the-lang-weil-bound/}\phantom{i}},
}

\bib{Vakil2006-schubert}{article}{
   author={Vakil, Ravi},
   title={Schubert induction},
   journal={Ann. of Math. (2)},
   volume={164},
   date={2006},
   number={2},
   pages={489--512},
   issn={0003-486X},
   review={\MR{2247966}},
   doi={10.4007/annals.2006.164.489},
}

\end{biblist}
\end{bibdiv}

\end{document}